  \newcommand{\miniscule}{\@setfontsize\miniscule{3}{7}}% \tiny: 5/6
    \newcommand{\stiny}{\@setfontsize\miniscule{5}{7}}% \tiny: 6/7
  \newcommand{\miniscule}{\@setfontsize\miniscule{3}{7}}% \tiny: 6/7
   \newcommand{\stiny}{\@setfontsize\miniscule{5}{7}}% \tiny: 6/7
  \newcommand{\miniscule}{\@setfontsize\miniscule{3}{7}}% \tiny: 6/7
    \newcommand{\stiny}{\@setfontsize\miniscule{5}{7}}% \tiny: 6/7
\sloppy\pagestyle{plain}%
\newtheorem{theorem}[equation]{Theorem}
\newtheorem*{maintheorem*}{Main Theorem}
\newtheorem{lemma}[equation]{Lemma}
\newtheorem{proposition}[equation]{Proposition}
\newtheorem{conjecture}[equation]{Conjecture}
\newtheorem*{question*}{Question}
\newtheorem*{problem*}{Problem}
\theoremstyle{definition}
\newtheorem{definition}[equation]{Definition}
\newtheorem{Mtheorem}[equation]{Main Theorem}
\theoremstyle{remark}
\makeatletter\@addtoreset{equation}{section} \makeatother
\title{Alpha invariant for general polarizations of  del Pezzo surfaces of degree 1}
\author{Kyusik Hong and Joonyeong Won}
\address{\emph{Kyusik
Hong}\newline \textnormal{School of Mathematics, Korea Institute
for Advanced Study  \newline  85 Hoegiro, Dongdaemun-gu, Seoul 130-722,
Korea \newline \texttt{kszoo@kias.re.kr}}}%
\address{\emph{Joonyeong
Won}\newline \textnormal{Algebraic Structure and its Applications Research Center, KAIST \newline 335 Gwahangno, Yuseong-gu, Daejeon 305-701, Korea\newline \texttt{leonwon@kias.re.kr}}}%
\thanks{The second author has been supported by the National Research Foundation in Korea (NRF-2014R1A1A2056432)}
\begin{document}

\begin{abstract}
For an arbitrary ample divisor $A$ in smooth del Pezzo surface $S$ of degree 1, we verify
 the condition of the polarization $(S,A)$ to be K-stable and it is a simple numerical condition.
\end{abstract}

\maketitle

All considered varieties are assumed to be
algebraic and defined over an algebraically closed field of
characteristic $0$ throughout this article.
\section{Introduction}
The $\alpha$-invariant that is introduced by Tian \cite{Ti87} gives a numerical criterion
for the existence of K¨ahler-Einstein metrics on Fano manifolds.
The paper  \cite{Ti87}
proved that if $X$ is a Fano variety of dimension $n$ with canonical divisor $K_X$, the
lower bound $\alpha(X, −K_X) >\frac{ n}
{n+1}$ implies that $X$ admits a K\"{a}hler-Einstein metric in
$c_1(X) = c_1(−K_X)$.

 On the other hand, the Yau-Tian-Donaldson conjecture states that the existence of a constant scalar curvature K\"{a}hler  metric in
$c_1(A)$ for a polarised manifold $(X,A)$ is equivalent to the algebro-geometric notion
of K-stability,  a certain version of stability notion of Geometric Invariant Theory.
This conjecture has recently
been proven  when the divisor $A$ is anticanonical (\cite{cds1},\cite{cds2},\cite{cds3},\cite{Ti15}).
 Odaka and Sano
\cite{os12} have given a direct algebraic proof that $\alpha(X, -K_X)>
\frac{n}{n+1}$ implies that
$(X, −K_X)$ is K-stable. Then Dervan \cite{Dervan} generalizes this result, gives a sufficient condition for general polarisations of Fano varieties to be
K-stable.
\begin{theorem}\label{dervan}(\cite{Dervan})
Let $(X, A)$ be a polarised $\mathbb{Q}$-Gorenstein log canonical variety of dimension $n$ with
canonical divisor $K_X$. And let $\nu(A)=\frac{-K_X\cdot A^{n-1}}{A^n}$. Suppose that
\begin{itemize}
\item[(i)] $\alpha(X, A)>  \frac{n}
{n+1}\nu(A)$ and \\
\item[(ii)] $−K_X -\frac{ n}{n+1}\nu( A)A$ is nef.
\end{itemize}
Then $(X, A)$ is K-stable.

\end{theorem}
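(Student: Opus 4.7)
The plan is to verify K-stability by proving that the Donaldson--Futaki invariant $\mathrm{DF}(\mathcal{X},\mathcal{L})$ of every nontrivial normal test configuration of $(X,A)$ is strictly positive, following the purely algebraic approach of Odaka and Sano. After compactifying a test configuration $(\mathcal{X},\mathcal{L})\to\mathbb{A}^{1}$ to $(\bar{\mathcal{X}},\bar{\mathcal{L}})\to\mathbb{P}^{1}$, the Wang--Odaka intersection formula rewrites, up to a fixed positive multiple,
$$\mathrm{DF}(\mathcal{X},\mathcal{L})\;=\;\bar{\mathcal{L}}^{n}\cdot\Bigl(K_{\bar{\mathcal{X}}/\mathbb{P}^{1}}+\tfrac{n}{n+1}\nu(A)\,\bar{\mathcal{L}}\Bigr),$$
so the problem reduces to showing that this intersection number is non-negative, with vanishing forcing $(\mathcal{X},\mathcal{L})$ to be a product configuration.

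Next I would pass to an equivariant log resolution $\pi\colon\widetilde{\mathcal{X}}\to\bar{\mathcal{X}}$ with simple normal crossing central fibre $\widetilde{\mathcal{X}}_{0}=\sum m_{j}F_{j}$, and write $K_{\widetilde{\mathcal{X}}/\mathbb{P}^{1}}=\pi^{*}K_{\bar{\mathcal{X}}/\mathbb{P}^{1}}+\sum a_{j}F_{j}$. Pulling back splits $\mathrm{DF}$ into a polarization piece $(\pi^{*}\bar{\mathcal{L}})^{n}\cdot\bigl(K_{\widetilde{\mathcal{X}}/\mathbb{P}^{1}}+\tfrac{n}{n+1}\nu(A)\pi^{*}\bar{\mathcal{L}}\bigr)$ and a discrepancy piece $-\sum a_{j}\,(\pi^{*}\bar{\mathcal{L}})^{n}\cdot F_{j}$. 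Hypothesis (ii) restricts to the statement that $-K_{X}-\tfrac{n}{n+1}\nu(A)A$ is nef on the general fibre, and, using that the total central fibre class is numerically trivial over $\mathbb{P}^{1}$, this promotes to a relative statement on $\widetilde{\mathcal{X}}$: modulo an effective combination of exceptional $F_{j}$, the negative of the polarization divisor is relatively nef. This puts the polarization piece under tight control.

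For the discrepancy piece I would invoke hypothesis (i). Each exceptional component $F_{j}$ of $\widetilde{\mathcal{X}}_{0}$ defines a divisorial valuation on $X$ via the restriction of $\pi$ to the main component of $\widetilde{\mathcal{X}}_{0}$ together with the correspondence between $\mathbb{C}^{*}$-equivariant sections of $\bar{\mathcal{L}}$ and sections of multiples of $A$; the coefficient $a_{j}+1$ is exactly the log discrepancy at this valuation of a natural $\mathbb{Q}$-divisor $D_{j}\sim_{\mathbb{Q}}A$ on $X$. The bound $\alpha(X,A)>\tfrac{n}{n+1}\nu(A)$ says that every pair $\bigl(X,\tfrac{n}{n+1}\nu(A)D\bigr)$ with $D\sim_{\mathbb{Q}}A$ effective is log canonical, yielding a uniform lower bound on the $a_{j}$. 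Combined with the polarization estimate, this dominates the negative terms in the intersection formula and gives $\mathrm{DF}(\mathcal{X},\mathcal{L})>0$ for every nontrivial test configuration.

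The principal obstacle is the dictionary just described: translating the \emph{global} alpha invariant of $(X,A)$ into \emph{local} valuation-theoretic estimates on the discrepancies $a_{j}$ appearing in the Odaka formula, and then verifying that these estimates combine with the nef contribution from (ii) tightly enough to force strict positivity. In the anticanonical case of Odaka--Sano this is automatic because $\bar{\mathcal{L}}$ and $-K_{\bar{\mathcal{X}}/\mathbb{P}^{1}}$ agree on the general fibre up to positive scale and condition (ii) becomes vacuous; for arbitrary $A$, hypothesis (ii) is the precise quantitative input measuring, and compensating for, the gap between these two divisors.
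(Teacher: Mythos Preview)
The paper does not prove this theorem at all: it is quoted from \cite{Dervan} as a black box and then applied. There is therefore nothing in the paper to compare your proposal against. Your sketch is, at the level of strategy, the approach Dervan himself uses---compactify a test configuration, invoke the Wang--Odaka intersection formula for $\mathrm{DF}$, resolve, and bound the two resulting pieces using hypotheses (i) and (ii) respectively---so in that sense you are on the right track if your goal is to reprove Dervan's result.

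That said, your treatment of the ``dictionary'' between $\alpha(X,A)$ and the discrepancies $a_j$ is the heart of the argument and is left essentially as a promise. In Dervan's actual proof the key step is not a valuation-by-valuation translation as you describe; rather, one constructs from the test configuration a single divisor on $X\times\mathbb{P}^1$ (by pushing forward an appropriate multiple of $\bar{\mathcal{L}}$ and using the $\mathbb{C}^*$-action to identify general fibres), and applies the $\alpha$-invariant bound to its slices. The nef hypothesis (ii) is then used not merely on the general fibre but to guarantee that a certain auxiliary class on the total space has the right sign when intersected with $\bar{\mathcal{L}}^n$. Your phrase ``promotes to a relative statement on $\widetilde{\mathcal{X}}$'' hides a genuine lemma, and your claim that ``$a_j+1$ is exactly the log discrepancy\ldots of a natural $\mathbb{Q}$-divisor $D_j\sim_{\mathbb{Q}}A$'' is not correct as stated---the relation between central-fibre discrepancies and log canonical thresholds on $X$ goes through filtrations or degenerations, not a direct identification with a single divisor per component. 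If you intend this as more than an outline, those two points need to be made precise.
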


For anti-canonically
polarised del Pezzo surfaces, the computation is completely done by Cheltsov \cite{Ch07a}.  The result \cite{Ch07a} implies that general anticanonically polarized del pezzo surfaces of degree $\leq 3$ are K-stable. Generalizing this, Dervan checked K-stability for certain polarizations $(S,A_{\lambda})$, where $S$ is del Pezzo surface of degree 1 and $A_{\lambda}=-K_S+\lambda(\text{exceptional curve})$. The computation of $\alpha$-invariant is valuable in its own sake, and motivated by Dervan's results, we study the $\alpha$-invariant for all polarization of del Pezzo surfaces of degree 1. By the computation, it turns out that condition $(ii)$  is stronger than  condition $(i)$ in Theorem \ref{dervan} for del Pezzo surfaces of degree 1. In other words, the present article proves
\begin{Mtheorem}\label{mtheorem}
Let $(S,A)$ be a polarized del Pezzo surface of degree 1. Suppose that  $−K_S -\frac{ 2(-K_S\cdot A)}{3 (A)^2}A$ is nef. Then $(S,A )$ is K-stable.
\end{Mtheorem}

\section{Preliminaries and Notations}
\subsection{$\alpha$-invariant}For a polarized  smooth Fano variety $(X,A)$, its $\alpha$-invariant can be defined as
$$
\alpha(X,A)=\mathrm{sup}\left\{c\in\mathbb{Q}\ \left|%
\aligned
&\text{the log pair}\ \left(X, cD\right)\ \text{is log canonical for every}\\
&\text{effective $\mathbb{Q}$-divisor}\ D \sim_{\mathbb{Q}} A.\\
\endaligned\right.\right\}.%
$$
For every effective $\mathbb{Q}$-divisor $B$ on $X$, the number $$\mathrm{lct}(X,B)=\mathrm{sup}\left\{ c\in \mathbb{Q}\ |\ \text{the log pair}\ (X,cD)\ \text{is log canonical}\right\}$$
is called the $\emph{log canonical threshold}$ of $B$. Note that $$\alpha(X,A) =\mathrm{inf}\left\{\mathrm{lct}(X,B)|\ B\text{ is an effective $\mathbb{Q}$-divisor such that }B\sim_{\mathbb{Q}}A\right\}$$
Tian introduced $\alpha$-invariant of smooth Fano varieties in \cite{Ti87} and proved
\begin{theorem}
(\cite{Ti87} Theorem 2.1]). Let $X$ be a smooth Fano variety of dimension $n$. If
$\alpha(X,-K_X) > \frac{n}{n+1}$, then $X$ admits a K\"{a}hler-Einstein metric.
\end{theorem}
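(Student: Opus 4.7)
The plan is to apply Theorem~\ref{dervan} with $n=2$. Since $\nu(A)=(-K_S\cdot A)/A^2$ and $\tfrac{n}{n+1}=\tfrac{2}{3}$, condition (ii) of that theorem is verbatim the nefness hypothesis of the Main Theorem. Hence K-stability of $(S,A)$ will follow once condition (i),
$$
\alpha(S,A) \;>\; \frac{2(-K_S\cdot A)}{3\,A^2},
$$
is verified. The substance of the Main Theorem is therefore the assertion that, for del Pezzo surfaces of degree $1$, the nefness assumption already forces this $\alpha$-invariant bound, matching the introduction's remark that ``condition (ii) is stronger than condition (i)''.

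To reduce condition (i) to a concrete calculation, I would first parametrize. Realize $S$ as the blow-up of $\mathbb{P}^2$ at $8$ points in general position, so $\mathrm{Pic}(S)$ has basis $H, E_1,\dots,E_8$ and $-K_S = 3H - \sum_i E_i$. Writing $A = dH - \sum_i m_i E_i$, the constant $c := \tfrac{2(-K_S\cdot A)}{3A^2}$ becomes an explicit rational function of $(d,m_1,\dots,m_8)$. Translating the nefness of $-K_S-cA$ into the inequalities $(-K_S - cA)\cdot C \ge 0$ as $C$ runs over the $240$ classes of $(-1)$-curves, over $-K_S$ itself, and over any other extremal nef ray of $S$, cuts out a concrete polyhedral region of admissible classes $A$.

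Next, for each admissible $A$, I would bound $\alpha(S,A)$ from below by contradiction: assume some effective $\mathbb{Q}$-divisor $D\sim_{\mathbb{Q}} A$ makes $(S,cD)$ non-log-canonical at a point $p\in S$. Standard inversion-of-adjunction together with the Corti-type multiplicity inequality $\mult_p D > 1/c$, and the Koll\'ar--Shokurov connectedness principle to localize a non-lc center, reduces the problem to intersecting $D$ with a well-chosen test curve through $p$. The natural candidates are the $(-1)$-curves through $p$, the pencil $|-K_S|$ when $p$ is its unique base point, and the conic or cubic classes that dominate the effective cone. Intersecting $D$ with such a curve $C$ and substituting the inequality $(-K_S)\cdot C \ge c\,(A\cdot C)$ supplied by the nefness hypothesis yields $\mult_p D\le 1/c$ with strict inequality, contradicting the assumed non-log-canonicity and thereby establishing condition (i).

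The main obstacle is the case analysis. Points of $S$ stratify according to the configuration of low-degree curves through them (how many $(-1)$-curves meet at $p$, whether $p$ is the base point of $|-K_S|$, whether $p$ lies on exceptional divisors of special contractions to a cubic surface or to $\mathbb{P}^2$), and the extremal nef ray that saturates the hypothesis $-K_S - cA$ nef can be any of several possibilities. For each such stratum, and each type of extremal ray, one must check that the chosen test curve produces the sharp multiplicity bound compatible with both $D\sim_{\mathbb{Q}} A$ and the nefness of $-K_S - cA$. Assembling these estimates into a single uniform statement valid throughout the admissible polyhedral region, and verifying that boundary cases of the nef region still deliver strict inequality in (i), is the core technical work.
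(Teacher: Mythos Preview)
You have addressed the wrong statement. The theorem singled out is Tian's result from \cite{Ti87}, which the paper merely \emph{quotes} as background and does not prove; there is no ``paper's own proof'' of it to compare against. Your proposal is instead a sketch for the paper's \emph{Main Theorem} (Theorem~\ref{mtheorem}) on K-stability of polarized degree-$1$ del Pezzo surfaces. So as an answer to the stated task it is simply off-target.

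Setting that aside and reading your proposal as an attempt at the Main Theorem, the overall architecture---apply Theorem~\ref{dervan}, so only condition~(i) needs checking, then reduce (i) to multiplicity/test-curve estimates constrained by the nefness of $-K_S-\tfrac{2}{3}\nu(A)A$---matches the paper's strategy. But the paper's execution is quite different and more structured than what you outline. Rather than working directly in coordinates $A=dH-\sum m_iE_i$ and stratifying by points $p$, the paper rescales by the Fujita invariant $\mu$ so that $\mu A\sim_{\mathbb Q}-K_S+\sum a_iE_i+aB$ with the $E_i$ (and possibly a fiber $B$) determined by the extremal face $\Delta_{(S,A)}$; it then \emph{computes $\alpha_c(S,\mu A)$ exactly} (Propositions~\ref{alphaCbirational}--\ref{alphaCcobdleP}) via a case analysis on $s_A=\sum_{i\ge2}a_i$, and separately uses connectedness of the non-klt locus (Lemma~\ref{lcs}) to show that either $\alpha=\alpha_c$ or $\alpha>\tfrac{2}{3+a_1}$ (Lemma~\ref{ptalpha}). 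The nefness hypothesis is used only once, against the specific $(-1)$-curve class $6h-3e_1-2\sum_{j\ge2}e_j$, to produce the single numerical bound $\tfrac{2}{3}\nu(\mu A)\le\tfrac{1}{3a_1+2s_A+1+2a}$, and the proof finishes by checking this bound sits below both $\alpha_c$ and $\tfrac{2}{3+a_1}$. Your plan of running inversion-of-adjunction at each $p$ against an unspecified test curve, and of invoking all $240$ nefness inequalities, leaves the key reduction---isolating which single intersection controls everything---unidentified, and does not anticipate the dichotomy $\alpha=\alpha_c$ versus $\alpha>\tfrac{2}{3+a_1}$ that the paper relies on.
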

We will make use of $\alpha$-invariant for curves $\alpha_c$ to give a bound of the $\alpha$-invariant.
$$
\alpha_c(X,A)=\mathrm{sup}\left\{c\in\mathbb{Q}\ \left|%
\aligned
&\text{the log pair}\ \left(X, cD\right)\ \text{is log canonical along all curves }\\
&\text{for every effective $\mathbb{Q}$-divisor}\ D \sim_{\mathbb{Q}}A.\\
\endaligned\right.\right\}.%
$$
If the variety is a surface, then the number $\alpha_c(X,A)$ is a reciprocal of the maximal multiplicity along a curve of a divisor $B$, where $B$ is $\mathbb{Q}$-lineary equivalent to $A$.

The present article deals with a del Pezzo surface $S$ of degree 1 and  makes application of Theorem \ref{dervan}. So the slope $\nu(A)$ is always denoted by $\frac{-K_S\cdot A}{A^2}$

  \subsection{del Pezzo surfaces of degree 1}
Let $S$ be a smooth del Pezzo surface of degree~$1$. It can be obtained by blowing up $\mathbb{P}^2$ at eight points in general position. Let $\pi:S\to\mathbb{P}^2$ be such a blow up and $E_1,\ldots, E_8$ be
its exceptional curves. Denote the point $\pi(E_i)$ by $P_i$.

Let $h$ be the divisor class in $S$ corresponding to
$\pi^*(\mathcal{O}_{\mathbb{P}^2}(1))$ and $e_i$ be the class of
the exceptional curves $E_i$, where $i=1,\ldots, 8$. Since the classes
$h, e_1,\ldots, e_8$ form an orthogonal basis of the Picard group
of $S$, for a divisor $A$ on $S$ we may write $[A]=\beta h+\sum_{i=1}^8 \beta_ie_i$, where $\beta$ and $\beta_i$'s  are constants.
It is well known that the divisor $A$ is ample if and only if the intersection number $A\cdot C$ is positive for all $-1$-curves $C$ and the curve $C$ corresponds to one of following classes

\begin{itemize}
\item[$\bullet$] $e_i$ ;
\item[$\bullet$]  $h-e_i-e_j$   for  $i\neq j$ ;
\item[$\bullet$] $2h-e_i-e_j-e_k-e_l-e_m$ for $i\neq j\neq k\neq l\neq m$ ;
\item[$\bullet$] $[-K_S]+e_i-e_j$  for  $i\neq j$ ;
\item[$\bullet$] $[-2K_S]-(2h-e_i-e_j-e_k-e_l-e_m)$ for $i\neq j\neq k\neq l\neq m$ ;
\item[$\bullet$]  $[-2K_S]-(h-e_i-e_j)$   for  $i\neq j$ ;
\item[$\bullet$] $[-2K_S]-e_i$
\end{itemize}

In other words,  relations attained by the intersection number define the ample cone of $S$.
The Mori cone  $\overline{\mathbb{NE}}(S)$ of the surface $S$ is
polyhedral. Moreover,
 it
is generated by all the $(-1)$-curves on $S$.

From now on, the divisor $A$ is always assumed to be ample, unless otherwise stated.
The following method to express the divisor $A$ in terms of $-K_S$ and $(-1)$-curves is adopted from \cite{CheltsovParkWon3}, \cite{pw16}.
For the log pair $(S, A)$, we define an invariant of~$(S,A)$ by
$$
\mu:=\mathrm{inf}\left\{\lambda\in\mathbb{Q}_{>0}\ \Big|\ \text{the $\mathbb{Q}$-divisor}\ K_{S}+\lambda A\ \text{is pseudo-effective}\right\}.%
$$
The invariant $\mu$ is always attained by a positive rational number. Let  $\Delta_{(S,A)}$ be the smallest extremal face of the boundary of the Mori cone $\overline{\mathbb{NE}}(S)$ that contains $K_{S}+\mu A$.
%We also define another  invariant of $(S,A)$ by
%$r:=\dim \Delta_{(S,A)}.$
%Then $r\leqslant 5$. Moreover, $r=0$ if and
%only if $A\sim -nK_{S}$ for some positive integer $n$.

 Let $\phi\colon S\to
Z$ be the contraction given by the face $\Delta_{(S,A)}$. Then either $\phi$ is a
birational  morphism or a conic bundle with
$Z\cong\mathbb{P}^1$. In the former case $\Delta_{(S,A)}$ is generated by $r$
disjoint $(-1)$-curves contracted by $\phi$, where  $r \leq 8$. In the later case,
$\Delta_{(S,A)}$ is generated by the $(-1)$-curves in the eight reducible fibers of $\phi$. Each reducible fiber
consists of two $(-1)$-curves that intersect transversally at one
point.

Suppose that $\phi$ is
birational. Let $E_1,\ldots,E_r $   be all $(-1)$-curves contained in
$\Delta_{(S,A)}$.  These are disjoint and generate the face $\Delta_{(S,A)}$. Therefore,
$$
K_{S}+\mu A\sim_{\mathbb{Q}}\sum_{i=1}^ra_iE_i
$$
for some positive rational numbers $a_1,\ldots,a_r$.
We have $a_i<1$ for every $i$ because  $A\cdot E_i>0$. Vice versa, for
every positive rational numbers $a_1,\ldots,a_r<1$, the divisor
$$
-K_{S}+\sum_{i=1}^ra_iE_i
$$
is ample.

 Suppose that $\phi$ is a conic
bundle. Then there are a $0$-curve $B$ and seven disjoint $(-1)$-curves $E_1, E_2, E_3, E_4,E_5,E_6,E_7$,  each of which is contained in a distinct fiber of $\phi$,
 such that
$$
K_{S}+\mu A\sim_{\mathbb{Q}} aB+\sum_{i=1}^7a_iE_i
$$
for some positive rational number $a$ and non-negative rational numbers $a_1, a_2, a_3 , a_4,a_5,a_6,a_7<1$.
In particular, these curves generate the face $\Delta_{(S,A)}$.
Vice versa, for  every positive rational number
$a$ and non-negative rational numbers $a_1, a_2, a_3 , a_4,a_5,a_6,a_7<1$
 the divisor
$$
-K_{S}+aB+\sum_{i=1}^7a_iE_i
$$
is ample.

Let us describe the notations we will use in the rest of the present paper. Unless otherwise
mentioned, these notations are fixed from now until the end of the paper.

\begin{itemize}
\item[$\bullet$] When the morphism $\phi$ is birational, $\mu A\sim_{\mathbb{Q}} -K_{S}+\sum_{i=1}^8a_iE_i $.\\
Fixing the order $a_1\geq a_2\geq\cdots \geq a_8$, $s_A$  = $\sum_{i=2}^8a_i$.
\item[$\bullet$]  When the morphism $\phi$ is conic bundle, $\mu A\sim_{\mathbb{Q}} -K_{S}+aB+\sum_{i=1}^7a_iE_i$.\\ Fixing the order $a_1\geq a_2\geq\cdots \geq a_7$, $s_A$  = $\sum_{i=2}^7a_i$.
\item[$\bullet$] $l_i$ is a $-1$-curve corresponding to a class $h-e_1-e_i$.
\item[$\bullet$] $Q$ is a $-1$-curve corresponding to a class $2h-e_1-e_5-e_6-e_7-e_8$.
\item[$\bullet$] $C_i$ is a $-1$-curve corresponding to a class $3h-2e_1-\sum_{j=2}^8e_j+e_i$.
\item[$\bullet$] $Z$ is a $-1$-curve corresponding to a class $6h-3e_1-2\sum_{j=2}^8e_j$.
\end{itemize}

\section{Log canonical thresholds  along curves}

  $$\mu A \sim_{\mathbb{Q}}-K_S +\sum_{i=1}^8 a_iE_i+aB,$$ where $a=0$ if $\phi $ is birational and $a_8=0$ if  $\phi $ is conic bundle.

Under the notations of Section 2, by choosing  six exceptional curves $D_1,\ldots, D_6$, where $\{D_1,\ldots, D_6\}\subset \{E_1,\ldots,E_6,E_7\}$, we obtain the birational morphism $S\rightarrow S_7$,   where $S_7$ is a del Pezzo surface of degree $7$. And there exist two disjoint $-1$-curves $D_7 $ and $D_8$ in $S_7$, We have the birational morphism $\pi : S\rightarrow \mathbb{P}^2$ defined by contraction of $D_1,\ldots, D_8$.  Let $d_1,\ldots,d_6,d_7,d_8$ be  divisor classes corresponding to $D_1,\ldots,D_8$ respectively.
If the morphism $\phi $ is birational, then  simply we have that $\{D_1,\ldots,D_8\}=\{E_1,\ldots,E_8\}$.

If the morphism $\phi $ is conic bundle and factors through $\mathbb{F}_1$, then we have that $\{D_1,\ldots,D_7\}=\{E_1,\ldots,E_7\}$. And  if the morphism $\phi $ is conic bundle and factors through $\mathbb{P}^1\times \mathbb{P}^1$, then we have that the divisor $D'=\{E_1,\ldots,E_7\}\setminus\{D_1,\ldots,D_6\}$ corresponds to a class $h-d_7-d_8$.

Note that the morphism $\pi$  depends on $E_i$'s.  We call $D_i$ as $\pi$-exceptional curve if it  belongs to the set that defines the morphism described previously.

For an effective divisor $D$ on a surface $X$, define a value $\sigma(D)$ to be $\mathrm{Max}\{a_i | D=\sum a_iD_i\text{, where $D_i$ is a irreducible curve}\}$.
\begin{definition}
On an algebraic surface $X$, we call the maximal multiplicity of divisor $A$ as $$\mathrm{sup}\{\sigma(D)~|~\text{$D$ is the effective $\mathbb{Q}$-divisor on $X$ and }D\sim_{\mathbb{Q}}A\}$$
\end{definition}

\begin{lemma}\label{mult_curve}
Suppose that  the maximal multiplicity $\alpha$ of $\mu A$ is greater than one.
 %and an ample divisor  $\mu A$ is $\mathbb{Q}$-linearly equivalent to an effective divisor $\alpha C+\Omega$, where $C$ is an irreducible curve and the support of $\Omega $ does not contain the curve $C$.
Then either the maximal multiplicity is attained on an  $\pi$-exceptional curve $D_i$ or
 we have the following inequality
$$\frac{2+s_A+2a_1-a_7-a_8+3a}{3}\geq \alpha.$$
\end{lemma}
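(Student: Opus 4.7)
The plan is to produce a nef divisor $M$ on $S$ satisfying $\mu A \cdot M = 2 + s_A + 2 a_1 - a_7 - a_8 + 3 a$ together with $C \cdot M \ge 3$ for every irreducible curve $C$ that is not a $\pi$-exceptional $D_i$. Writing an effective representative $D = \alpha C + \Omega \sim_{\mathbb{Q}} \mu A$ with $\Omega$ effective and not containing $C$, nefness of $M$ gives $\Omega \cdot M \ge 0$, and therefore
\[
3 \alpha \le \alpha (C \cdot M) \le D \cdot M = \mu A \cdot M = 2 + s_A + 2 a_1 - a_7 - a_8 + 3 a,
\]
which is exactly the required inequality.

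The candidate I would use is $M := 3h - 2 d_1 - d_2 - d_3 - d_4 - d_5 - d_6$ in the basis $(h, d_1, \ldots, d_8)$ of $\operatorname{Pic}(S)$ coming from $\pi$. First I would verify $M$ is nef. Since $\overline{\mathbb{NE}}(S)$ is generated by the $(-1)$-curves, this reduces to a finite combinatorial check: intersect $M$ with each class in the seven families of $(-1)$-curves listed in Section~2 and confirm non-negativity, using $d_i \cdot M = (2,1,1,1,1,1,0,0)$ and similar tight-but-nonnegative values such as $M \cdot (h - d_i - d_j) \in \{0,1,2,3\}$.

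Next I would compute $\mu A \cdot M$ by expanding $\mu A \sim_{\mathbb{Q}} -K_S + \sum a_i E_i + a B$: using $(-K_S) \cdot M = 2$ and the intersection numbers just listed, one has $\sum a_i (E_i \cdot M) = 2 a_1 + a_2 + \cdots + a_6 = s_A + 2 a_1 - a_7 - a_8$; in the conic-bundle case a short calculation (treating the factorization through $\mathbb{F}_1$, where $B = h - d_8$, and through $\mathbb{P}^1 \times \mathbb{P}^1$, where $B$ is determined via the class $h - d_7 - d_8$ of the extra $E$-curve) delivers $B \cdot M = 3$.

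The main obstacle is establishing $C \cdot M \ge 3$. Writing $[C] = \beta h - \sum \gamma_i d_i$ with $\beta = h \cdot C \ge 1$ (since $C$ is not $\pi$-exceptional, so $\pi(C)$ is a curve of positive degree) and all $\gamma_i \ge 0$, one has $C \cdot M = 3\beta - 2\gamma_1 - \gamma_2 - \cdots - \gamma_6$. I would combine the constraint $\sum \gamma_i^2 \le \beta^2 + 1$ (from $C^2 \ge -1$) with the ordering $a_1 \ge \cdots \ge a_8$ and the freedom to choose $\{D_1, \ldots, D_6\} \subset \{E_1, \ldots, E_7\}$, reducing to a finite list of edge cases. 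The delicate ones are $\beta = 1$ with $C = h - d_1 - d_j$ (the $l_j$'s) and certain conic classes through $P_1$; for these I anticipate handling them either by relabeling the $\pi$-exceptional curves so that $C$ falls into the first alternative, or by switching to a secondary nef test divisor (for example $h - d_k$ with $k$ chosen to minimize $a_k$, which yields $\alpha \le 2 + a_k$) that already implies the desired bound once $\alpha > 1$.
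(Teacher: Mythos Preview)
Your intersection-with-a-nef-class framework is exactly right, and your $M = 3h - 2d_1 - d_2 - \cdots - d_6$ is in fact a member of the same family of nef $0$-curves the paper uses. The gap is that no \emph{single} such $M$ can give $C \cdot M \ge 3$ for every non-exceptional irreducible curve: for instance $M \cdot L_{12} = 3 - 2 - 1 = 0$, $M \cdot L_{23} = 1$, and $M \cdot (2h - d_1 - d_2 - d_3 - d_4 - d_5) = 0$, so the problematic curves are not only the $l_j$'s you list. Neither patch closes this. In the birational case the set $\{D_1,\ldots,D_8\} = \{E_1,\ldots,E_8\}$ is forced by the lemma's setup, so you cannot relabel $L_{12}$ into a $\pi$-exceptional curve. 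The secondary class $h - d_k$ yields only $\alpha \le 2 + a_k$; with, say, all $a_i = \tfrac{1}{100}$ this gives $\alpha \le 2.01$, whereas the required bound is $\tfrac{1}{3}(2 + s_A + 2a_1 - a_7 - a_8) \approx 0.69$. The quadratic constraint $\sum \gamma_i^2 \le \beta^2 + 1$ is likewise too weak: via Cauchy--Schwarz one gets $2\gamma_1 + \gamma_2 + \cdots + \gamma_6 \le 3\sqrt{\beta^2+1}$, hence only $C \cdot M \ge 3\beta - 3\sqrt{\beta^2+1} < 0$.

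What the paper does differently is twofold. First, it invokes the hypothesis $\alpha > 1$ \emph{before} choosing the test class: intersecting with $h$ (nef, $\mu A \cdot h = 3$) and using the Cox-ring generators of $S$ forces $C \cdot h \le 2$, so $C$ is $\pi$-exceptional, a line $L_{ij}$, or a conic $C_{ijlmn}$. Second, for each such $L_{ij}$ or $C_{ijlmn}$ it selects a \emph{curve-dependent} nef $0$-class $C_{ip} + D_j \sim 3h - 2d_p - \sum_{k \ne i,j,p} d_k$, choosing $p \notin \{i,j\}$ so that $(C_{ip}+D_j) \cdot C = 3$ while $(C_{ip}+D_j) \cdot \mu A \le 2 + s_A + 2a_1 - a_7 - a_8 + 3a$. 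Your $M$ is precisely this class for one fixed choice of $(i,j,p)$; the missing step is to let the test divisor vary with $C$, which in turn requires the preliminary reduction to $h$-degree at most $2$.
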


\begin{proof}
Note that any effective divisor on $S$ is generated by  $-1$-curves of 240 types and $K_1,K_2\in |-K_S|$ (cf. See \cite{tvv09}).  Suppose that an effective divisor $\alpha C + \Gamma$ is $\mathbb{Q}$-linearly equivalent to $\mu A$, where $C$ is an irreducible curve and the support of  $\Gamma$ does not contain $C$. The curve $C$ is linearly equivalent to $\sum b_iB_i$, where $b_i$ is integer and $B_i$ is $-1$-curve or $K_i$ and $b_i=1$ for all $i$ by the maximality of $\alpha$. . So we have that $\alpha(\sum B_i)+\Gamma\sim_{\mathbb{Q}}\mu A$.

Now consider the intersection
$$3=\mu A \cdot h\geq \alpha\sum B_k\cdot h,$$
where $h=\pi^*(\mathcal{O}_{\mathbb{P}^2}(1))$. It means that $B_k\cdot h\leq 2$. Thus $B_k$ is a $\pi$-exceptional curve or one of  curves $L_{ij}$ and $C_{ijlmn}$ which correspond to classes  $h-d_i-d_j$ and $2h-d_i-d_j-d_l-d_m-d_n$.

Now assume that $\alpha L_{ij}+\Omega \sim_{\mathbb{Q}}\mu A$, where  the support of  $\Omega$ does not contain $L_{ij}$. Let $C_{ip}$  and $D_j$ be a curve corresponding to a class $h-2d_p-\sum_{k\neq p}d_k +d_i$ and $d_j$, where  $p\notin \{i,j\}$.
In the cases that $\phi$ is birational or $\phi $ is conic bundle which factor through $\mathbb{F}_1$, then the following inequality holds
$$2+s_A+2a_1-a_7-a_8+3a\geq 2+s_A+2a_p-a_i-a_j+3a\geq (C_{ip}+D_j)\cdot \mu A\geq 3\alpha. $$
When  the morphism $\phi $ is conic bundle and factors through $\mathbb{P}^1\times \mathbb{P}^1$,
 Assume that $\{D_1,\ldots,D_6\}=\{E_1,\ldots,E_6\}$. If $\{i,j\}\subset\{1,\ldots,6\}$, then we have the same inequality
 $$2+s_A+2a_1-a_7-a_8+3a\geq 2+s_A+2a_p-a_i-a_j+3a\geq (C_{ip}+D_j)\cdot \mu A\geq 3\alpha. $$
 If $\{i,j\}\subset\{7,8\}$, that is, $L_{78}=E_7$, then contract $E_2,\ldots, E_6,E_7$, we have the morphism $S\rightarrow S_7$. And there is two disjoint $-1$-curves $D_7,D_8$ such that $E_1$ meets  $D_7,D_8$. The contraction $\pi'$ of $E_2,\ldots, E_6,E_7,D_7,D_8$ defines the morphism $S\rightarrow \mathbb{P}^2$. Thus $L_{78}$ is $\pi'$-exceptional.
 Now assume that $i=6,j=8$. then contract $E_1,\ldots, E_5,E_7$, we have the morphism $S\rightarrow S_7$. And there is two disjoint $-1$-curves $D_7,D_8$ such that $E_6$ meets  $D_7,D_8$.
  Moreover $L_{68}$ is one of $D_7,D_8$.
  The contraction $\pi''$ of $E_1,\ldots, E_5,E_7,D_7,D_8$ defines the morphism $S\rightarrow \mathbb{P}^2$. Thus $L_{68}$ is $\pi''$-exceptional

$\alpha C_{ijlmn}+\Omega \sim_{\mathbb{Q}}\mu A$, where  the support of  $\Omega$ does not contain $C_{ijlmn}$. Let $C_{ip}$  and $D_j$ be a curve corresponding to a class $h-2d_p-\sum_{k\neq p}d_k +d_i$ and $d_j$, where  $p\notin \{i,j,l,m,n\}$. Then in any case the following inequality holds
$$2+s_A+2a_1-a_7-a_8+3a\geq 2+s_A+2a_p-a_i-a_j+3a\geq (C_{ip}+D_j)\cdot \mu A\geq 3\alpha. $$

\end{proof}

In all lemmas of the present section,  it is easily verified that the maximal multiplicity is attained on a exceptional curve by Lemma \ref{mult_curve}. In other words, we can always find
a divisor whose multiplicity along $E_i$ is greater than or equal to the value of the bound in Lemma \ref{mult_curve}. Moreover, for all $\pi$-exceptional curves, the processes to find out maximal multiplicity along the curve are the same. Thus we will consider only maximal multiplicity along single exceptional curve $E_1$ which computes $\alpha_c(S,A)$ according to the order of $a_i$'s.

\subsection{birational morphism case.}

$$
K_{S}+\mu A\sim_{\mathbb{Q}}\sum_{i=1}^8a_iE_i
$$
for some positive rational numbers $a_1,\ldots,a_8$.

\begin{lemma}\label{s>4}
If $s_A\geq 4$, then $\alpha_c(S,\mu A)=\frac{1}{2+a_1}$.
\end{lemma}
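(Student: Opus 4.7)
The plan is to compute $\alpha_c(S,\mu A)$ by showing that the maximal multiplicity of an effective $\mathbb{Q}$-divisor $D\sim_{\mathbb{Q}}\mu A$ equals $2+a_1$ and is attained along $E_1$; the reciprocal then gives $\alpha_c(S,\mu A)=1/(2+a_1)$.

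First I invoke Lemma~\ref{mult_curve} (with $a=0$, since $\phi$ is birational) to rule out the case where the extremal multiplicity occurs along a non-$\pi$-exceptional curve. The inequality $\tfrac{2+s_A+2a_1-a_7-a_8}{3}\le 2+a_1$ reduces to $a_2+a_3+a_4+a_5+a_6\le 4+a_1$, which holds because each $a_i\le a_1<1$ for $i\ge 2$ yields $a_2+\cdots+a_6\le 5a_1\le 4+a_1$. Hence it suffices to consider multiplicities along the exceptional curves $E_1,\dots,E_8$, and by the ordering $a_1\ge\cdots\ge a_8$ the maximum occurs along $E_1$.

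For the upper bound, let $m=\mult_{E_1}(D)$ and write $D=mE_1+D'$ with $D'$ effective and $E_1\not\subset\mathrm{Supp}(D')$. Since $\mu A$ has class $3h+(a_1-1)e_1+\sum_{j\ge 2}(a_j-1)e_j$, the pushforward $\pi_*D'$ is an effective $\mathbb{Q}$-divisor of degree $3$ on $\mathbb{P}^2$. Combining the identity $D'\cdot E_1=\mult_{P_1}(\pi_*D')$ (valid because $D'$ carries no $E_1$-component, so exceptional contributions vanish and proper-transform components contribute their multiplicity at $P_1$) with $D'\cdot E_1=(\mu A-mE_1)\cdot E_1=(1-a_1)+m$ and the B\'ezout-type estimate $\mult_{P_1}(\pi_*D')\le\deg(\pi_*D')=3$ yields $m\le 2+a_1$.

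For the matching lower bound the hypothesis $s_A\ge 4$ enters decisively. Choose non-negative rationals $y_2,\dots,y_8$ with $y_j\ge 1-a_j$ and $\sum_{j=2}^{8}y_j=3$; this is feasible precisely because $\sum_{j=2}^{8}(1-a_j)=7-s_A\le 3$. Let $\widetilde{C}=\sum_{j=2}^{8}y_j\, l_j$, of class $3h-3e_1-\sum_{j\ge 2}y_je_j$, and set
$$D_0=\widetilde{C}+(2+a_1)E_1+\sum_{j=2}^{8}(y_j+a_j-1)E_j.$$
A direct class computation confirms $D_0\sim_{\mathbb{Q}}\mu A$, and $D_0$ is effective since $y_j+a_j-1\ge 0$ for all $j$. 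Hence $\mult_{E_1}(D_0)=2+a_1$ realises the upper bound, so $\alpha_c(S,\mu A)=1/(2+a_1)$. The main obstacle is the existence of this extremal divisor; the hypothesis $s_A\ge 4$ is precisely what permits the distribution of the $y_j$'s under the constraints $y_j\ge 1-a_j$ and $\sum y_j=3$, so the threshold in the lemma is sharp.
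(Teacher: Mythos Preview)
Your proof is correct and follows essentially the same strategy as the paper. Your upper bound via pushforward to $\mathbb{P}^2$ is a clean repackaging of the paper's computation: the paper intersects $D$ with $\sum_{p\neq i}L_p+\sum_{h\neq i}E_h\sim 7(h-e_i)$, which yields exactly the same inequality $\mathrm{mult}_{E_i}(D)\le 2+a_i$; and your family of extremal divisors $D_0$ (parameterised by the $y_j$'s) specialises, with $y_j=1-a_j$ for $2\le j\le 7$ and $y_8=s_A-a_8-3$, to the explicit divisor the paper writes down.
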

\begin{proof}

There exist an effective divisor $$\mu A\sim_{\mathbb{Q}}(1-a_2)l_2+\cdots +
(1-a_7)l_7+(s_A-a_8-3)l_8+(2+a_1)E_1+(s_A-4)E_8.$$ Therefore we have that $\alpha_c(S,\mu A)\leq\frac{1}{2+a_1}$.

Now for $\eta<\frac{1}{2+a_1}$, suppose that the  pair $(S,\eta D)$ is not log canonical a along irreducible curve $C$, where the effective divisor $D$ is $\mathbb{Q}$-linearly equivalent to $\mu A$. Then we write $D=\alpha C+\Omega$, where the support of $\Omega$ does not contain  $C$. Since the inequality $\frac{2+s_A+2a_1-a_7-a_8+3a}{3}\leq 2+a_1$ holds, the curve $C$ is one of $E_i$ by Lemma \ref{mult_curve}.

Write $\mu A=\alpha E_i+\sum_{h\neq i} b_h E_h+\Omega$, where  the support of $\Omega$ does not contain $E_i$ and $E_h$'s. Then we have $$14+7a_i=(\sum_{p\neq i}^8 L_p+\sum_{h\neq i}^8 E_h)\cdot\mu A\geq 7\alpha,$$ where the $-1$-curve $L_p$ corresponds to a class of $h-e_p-e_i$,  so that $2+a_i\geq \alpha$. It is a contradiction, thus $\alpha_c(S,\mu A)= \frac{1}{2+a_1}$.

%Because of the order of $a_i$'s and the definition of $s_A$, we have that $6+a_1\geq 7a_1\geq s_A$,

%Suppose the pair $(S,\frac{1}{2+a_1}D)$, where $D\equiv \mu A$ is not log canonical. By the Lemma \ref{mult_curve}, there is $\lambda\in \mathbb{Q}$ such that $0<\lambda <\frac{1}{2+a_1}$ and $(S,\lambda D)$ is not log canonical at $P$. Then $$\mathrm{LCS}(S,\lambda D)=\{P\}.$$ But there is a birational morphism $\pi:S\rightarrow \mathbb{P}^2$ such that $\pi$ is an isomorphism in a neighborhood of the point $P$. Then $\pi(\mu A)\equiv -\lambda K_{\mathbb{P}^2}$. Let $L$ be a general line on $\mathbb{P}^2$. Then $$\pi(P)\cup L\subseteq \mathrm{LCS}(\mathbb{P}^2,\pi(\mu A)+L),$$ which is absurd.

\end{proof}
For each $E_i$, considering  the  maximal multiplicity  along $E_i$ is similar and we easily have that  the maximum is attained on $E_1$ among them.
From now on we only consider the  maximal multiplicity along $E_1$ of $\mu A$.

\begin{lemma}\label{s23}
Assume that $1\leq s_A\leq 4$ and $1+a_4< a_2+a_3$. If $\frac{2}{3}+\frac{1}{3}s_A<a_2+a_3$, then $\alpha_c(S,\mu A)=\frac{2}{2+2a_1+s_A-a_2-a_3}$.
\end{lemma}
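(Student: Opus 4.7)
The plan is to establish the two inequalities making up $\alpha_c(S,\mu A)=\frac{2}{2+2a_1+s_A-a_2-a_3}$. Setting $\alpha:=(2+2a_1+s_A-a_2-a_3)/2$, this amounts to showing that the maximal multiplicity along an irreducible curve of an effective $\mathbb{Q}$-divisor $\mathbb{Q}$-linearly equivalent to $\mu A$ equals $\alpha$, and is attained along $E_1$.

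For the construction, I write down the explicit candidate
\[
D = \alpha E_1+(1-a_3)\,C_2+(1-a_2)\,C_3+c_Q\,Q+(a_2+a_3-1-a_4)\,l_4+\sum_{i=5}^{8}\bigl(\tfrac{a_4+a_5+a_6+a_7+a_8}{2}-a_i\bigr)\,l_i,
\]
where $c_Q:=(a_2+a_3)-1-(a_4+a_5+a_6+a_7+a_8)/2$. Matching coefficients of $h$ and of $e_1,\ldots,e_8$ verifies $D\sim_{\mathbb{Q}}\mu A$. The three hypotheses translate into non-negativity of every coefficient: the inequality $a_2+a_3>(2+s_A)/3$ is equivalent to $c_Q\geq 0$; $a_2+a_3>1+a_4$ makes the coefficient of $l_4$ positive; and the ordering $a_4\geq a_5\geq\cdots\geq a_8$ makes the coefficients of $l_5,\ldots,l_8$ non-negative. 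Since $\operatorname{mult}_{E_1}(D)=\alpha$, this gives $\alpha_c(S,\mu A)\leq 2/(2+2a_1+s_A-a_2-a_3)$.

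For the opposite inequality, the preceding reduction lets me restrict attention to the multiplicity along $E_1$. Lemma \ref{mult_curve} says any irreducible curve with multiplicity greater than $1$ is either some $E_i$ (which reduces to $E_1$ by the symmetry argument) or contributes at most $(2+s_A+2a_1-a_7-a_8)/3$, and a short manipulation using $2a_1\geq a_2+a_3$, $s_A\geq a_2+a_3$, and $a_i<1$ shows this is $\leq\alpha$. To bound the multiplicity along $E_1$ itself I use the test class
\[
T=3h-2e_1-e_4-e_5-e_6-e_7-e_8,
\]
which satisfies $T\cdot E_1=2$ and $\mu A\cdot T=2+2a_1+s_A-a_2-a_3$. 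I verify that $T$ is nef by checking $T\cdot C\geq 0$ for every class of $(-1)$-curve listed in the Preliminaries; this suffices since the Mori cone of $S$ is generated by the $(-1)$-curves. For an effective decomposition $\mu A=\alpha'E_1+\Omega$ with $E_1\notin\operatorname{supp}(\Omega)$, the nefness of $T$ forces $\Omega\cdot T\geq 0$, whence $2\alpha'\leq\mu A\cdot T$, i.e.\ $\alpha'\leq\alpha$.

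The main obstacle is spotting the correct test class $T$: its intersection pattern with the exceptional curves ($T\cdot E_2=T\cdot E_3=0$, $T\cdot E_i=1$ for $i\geq 4$, and $T\cdot E_1=2$) is precisely what forces the ratio $\mu A\cdot T/(T\cdot E_1)$ to land at $\alpha$. Once $T$ is in hand the nefness check is a systematic case-analysis across the seven families of $(-1)$-curves on $S$, and the construction is discovered by solving the linear system that results from setting the combined cubic-coefficient $(1-a_3)+(1-a_2)=2-a_2-a_3$, which forces the residual coefficients at $E_2,E_3$ to vanish.
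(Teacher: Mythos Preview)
Your proof is correct and essentially identical to the paper's. Your effective divisor $D$ is exactly the one the paper writes down (your $c_Q$ equals the paper's $\frac{3a_2+3a_3-s_A-2}{2}$, and your $l_i$-coefficients $\frac{a_4+\cdots+a_8}{2}-a_i$ equal $\frac{s_A-a_2-a_3-2a_i}{2}$). For the upper bound on the multiplicity, your nef test class $T=3h-2e_1-e_4-\cdots-e_8$ is precisely half of the paper's class $C_2+C_3+E_2+E_3$; indeed $T=C_2+E_3=C_3+E_2$, which also gives a one-line verification of nefness. The only cosmetic difference is that you check nefness of $T$ directly, whereas the paper pulls out the components $C_2,C_3,E_2,E_3$ from $D$ and observes that each has zero intersection with the test class---but this implicitly uses the same nefness.
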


\begin{proof}
There is an effective divisor

\begin{eqnarray*}
\mu A\sim_{\mathbb{Q}} (1-a_2)C_3+(1-a_3)C_2+\frac{3a_2+3a_3-s_A-2}{2}Q_4+(a_2+a_3-a_4-1)l_4\\
+\sum_{i=5}^8\frac{s_A-2a_i-a_2-a_3}{2}l_i
+\frac{2+2a_1+s_A-a_2-a_3}{2}E_1.
\end{eqnarray*}
Therefore we have that  $\alpha_c(S,\mu A)\leq\frac{2}{2+2a_1+s_A-a_2-a_3}$.

Now for $\eta<\frac{2}{2+2a_1+s_A-a_2-a_3}$, suppose that the  pair $(S,\eta D)$ is not log canonical along a irreducible curve $C$, where the effective divisor $D$ is $\mathbb{Q}$-linearly equivalent to $\mu A$. Then we write $D=\alpha C+\Omega$, where the support of $\Omega$ does not contain  $C$. Since the inequality $\frac{2+s_A+2a_1-a_7-a_8+3a}{3}\leq \frac{2+2a_1+s_A-a_2-a_3}{2}$ holds, the curve $C$ is one of $E_1$ by Lemma \ref{mult_curve}.

If we write an effective divisor which is $\mathbb{Q}$-linearly equivalent to $\mu A$ as $D=\alpha E_1+b_1C_2+b_2C_3+c_1E_2+c_2E_3+\Omega$, where the support of $\Omega$ does not contain  $E_1,E_2,E_3,C_2$ and $C_3$, then the inequality
\begin{eqnarray*}
4+4a_1+2s_A-2a_2-2a_3=(C_2+C_3+E_2+E_3)\cdot D\geq 4\alpha.
\end{eqnarray*}
holds. So we obtain that $\alpha_c(S,\mu A)=\frac{2}{2+2a_1+s_A-a_2-a_3}$.
%Write $\mu A =\alpha E_1+b_1 E_2 +b_2 E_3+d_1C_2+d_2C_3+\Omega$, where  $C_i$ is a $-1$-curve %corresponding to a class $h-2e_1-e_2-\cdots-e_8+e_i$ and   $\Omega$ does not contain $E_1,E_2,E_3$ %and $C_2,C_3$. Then we have that $$4+4a_1+2s_A-a_2-a_3=(-K_S+E_2+E_3+C_2+C_3)\cdot \mu A\geq %4\alpha,$$ so that $\frac{2+2a_1+s_A-a_2-a_3}{2}\geq \alpha$
\end{proof}

\begin{lemma}\label{s234}
Assume that $1\leq s_A\leq 4$, $1+a_4\geq a_2+a_3$ and $1+2a_5<a_2+a_3+a_4$. If $\frac{1}{3}+\frac{2}{3}s_A<a_2+a_3+a_4$, then $\alpha_c(S,\mu A)=\frac{4}{3+4a_1+2s_A-a_2-a_3-a_4}$.
\end{lemma}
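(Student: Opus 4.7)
My plan is to establish the equality $\alpha_c(S,\mu A)=\tfrac{4}{3+4a_1+2s_A-a_2-a_3-a_4}$ in the same two-step style as the proof of Lemma \ref{s23}: first produce an explicit effective divisor realising the claimed multiplicity along $E_1$, then bound the multiplicity of an arbitrary effective divisor by intersecting with a suitable test divisor.

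For the \textbf{upper bound} I would construct an effective $\mathbb{Q}$-divisor of the shape
\begin{eqnarray*}
\mu A\sim_{\mathbb{Q}} (1-a_2)C_3+(1-a_3)C_4+(1-a_4)C_2+q\,Q+p\,l_5+\sum_{i=6}^{8}r_i\,l_i+\alpha_0\,E_1,
\end{eqnarray*}
where $\alpha_0=\tfrac{3+4a_1+2s_A-a_2-a_3-a_4}{4}$, $Q$ is the conic class $2h-e_1-e_5-e_6-e_7-e_8$, and the coefficients $p,q,r_i$ are uniquely determined by matching against $\mu A\sim_{\mathbb{Q}}-K_S+\sum a_iE_i$ coefficient by coefficient in the basis $h,e_1,\ldots,e_8$. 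The three hypotheses of the lemma correspond exactly to positivity of the three delicate coefficients: $\tfrac{1}{3}+\tfrac{2}{3}s_A<a_2+a_3+a_4$ is precisely what makes $q\geq 0$; $1+2a_5<a_2+a_3+a_4$ is precisely what makes $p\geq 0$; and $1+a_4\geq a_2+a_3$ is what forces a third cubic $C_2$ into the decomposition (as opposed to the two-cubic decomposition used in Lemma \ref{s23}). This establishes $\alpha_c(S,\mu A)\leq 1/\alpha_0$.

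For the \textbf{lower bound} I argue by contradiction: suppose there exists an effective $D\sim_{\mathbb{Q}}\mu A$ and a rational $\eta<1/\alpha_0$ such that $(S,\eta D)$ fails to be log canonical along an irreducible curve $C$, so $\mathrm{mult}_C D>\alpha_0$. Using $a_2+a_3+a_4\leq 3a_1$ together with $2s_A\geq 2(a_2+a_3+a_4)$ one gets
\begin{eqnarray*}
\frac{2+s_A+2a_1-a_7-a_8}{3}\;\leq\;\alpha_0,
\end{eqnarray*}
so Lemma \ref{mult_curve} forces $C$ to be a $\pi$-exceptional curve; by symmetry and the ordering $a_1\geq\cdots\geq a_8$ we reduce to $C=E_1$. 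I then choose the test divisor $\Sigma:=C_2+C_3+E_4$. Direct intersection calculations give $E_1\cdot\Sigma=4$, $C_2\cdot\Sigma=-1+0+1=0$, $C_3\cdot\Sigma=0-1+1=0$, $E_4\cdot\Sigma=1+1-1=1\geq 0$, and
\begin{eqnarray*}
\mu A\cdot\Sigma=(1+2a_1+s_A-a_2)+(1+2a_1+s_A-a_3)+(1-a_4)=3+4a_1+2s_A-a_2-a_3-a_4.
\end{eqnarray*}
Writing $D=\alpha E_1+\beta C_2+\gamma C_3+\Omega$ with $\Omega$ effective and containing none of $E_1,C_2,C_3$ in its support, every irreducible component of $\Omega$ meets every component of $\Sigma$ non-negatively, so $D\cdot\Sigma\geq 4\alpha$, yielding $\alpha\leq\alpha_0$, a contradiction.

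The \textbf{main obstacle} is the explicit upper-bound construction: identifying the correct conic $Q$ (indexed so that its class avoids $e_2,e_3,e_4$) and verifying that the three stated hypotheses correspond bijectively to positivity of the coefficients $(1-a_4)$, $q$ and $p$. Once the decomposition is written down and the test divisor $\Sigma=C_2+C_3+E_4$ is spotted as the natural upgrade of $\Sigma_{\ref{s23}}=C_2+C_3+E_2+E_3$ (with one exceptional curve swapped for $E_4$ so as to pick up the $a_4$ contribution), both directions close cleanly.
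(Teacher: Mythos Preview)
Your lower-bound argument is correct, and in fact your test divisor $\Sigma=C_2+C_3+E_4$ is neater than the one the paper uses. The paper intersects with $-K_S+Q+l_5+l_6+l_7+l_8+C_2+C_3+C_4$, obtaining $9+12a_1+6s_A-3(a_2+a_3+a_4)\geq 12\alpha$; your three-curve $\Sigma$ gives the same bound $3+4a_1+2s_A-a_2-a_3-a_4\geq 4\alpha$ with less bookkeeping. The check that $\Sigma$ meets every irreducible curve other than $E_1,C_2,C_3$ non-negatively goes through as you indicate.

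The genuine gap is in your upper-bound construction. With the coefficients $(1-a_2)C_3+(1-a_3)C_4+(1-a_4)C_2$ the decomposition cannot be $\mathbb{Q}$-linearly equivalent to $\mu A$. The curves $Q,\,l_5,\ldots,l_8,\,E_1$ all have vanishing $e_2,e_3,e_4$ components, so the $e_2,e_3,e_4$ coefficients on the right-hand side are determined entirely by the $C$-terms. For instance the $e_2$ coefficient of your right-hand side is $-(1-a_2)-(1-a_3)=-(2-a_2-a_3)$, whereas $\mu A$ has $e_2$ coefficient $-(1-a_2)$; these agree only when $a_3=1$. The correct system is $x_3+x_4=1-a_2$, $x_2+x_4=1-a_3$, $x_2+x_3=1-a_4$, whose unique solution is
\[
x_2=\frac{1+a_2-a_3-a_4}{2},\quad x_3=\frac{1-a_2+a_3-a_4}{2},\quad x_4=\frac{1-a_2-a_3+a_4}{2},
\]
and this is exactly the decomposition the paper writes down. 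With these coefficients the hypothesis $1+a_4\geq a_2+a_3$ is precisely the non-negativity of $x_4$ (not merely ``what forces a third cubic''), while the other two hypotheses control the coefficient of $Q$ and of $l_5$ respectively, just as you guessed. Once you replace your $(1-a_j)$'s by the $x_k$'s above, the rest of your outline is fine.
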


\begin{proof}
We can find an effective divisor

\begin{eqnarray*}
\mu A\sim_{\mathbb{Q}} \frac{1-a_3-a_4+a_2}{2}C_2 +\frac{1-a_2-a_4+a_3}{2}C_3+\frac{1-a_2-a_3+a_4}{2}C_4\\
+\frac{3a_2+3a_3+3a_4-2s_A-1}{4}Q
+\sum_{i=5}^8\frac{2s_A-a_2-a_3-a_4-4a_i-1}{4}l_i\\
+\frac{3+4a_1+2s_A-a_2-a_3-a_4}{4}E_1.
\end{eqnarray*}

Thus $\alpha_c(S,\mu A)\leq\frac{4}{3+4a_1+2s_A-a_2-a_3-a_4}$.

Now for $\eta<\frac{4}{3+4a_1+2s_A-a_2-a_3-a_4}$, suppose that the  pair $(S,\eta D)$ is not log canonical along a irreducible curve $C$, where the effective divisor $D$ is $\mathbb{Q}$-linearly equivalent to $\mu A$. Then we write $D=\alpha C+\Omega$, where the support of $\Omega$ does not contain  $C$. Since the inequality $\frac{2+s_A+2a_1-a_7-a_8+3a}{3}\leq \frac{3+4a_1+2s_A-a_2-a_3-a_4}{4}$ holds, the curve $C$ is one of $E_1$ by Lemma \ref{mult_curve}.

If we write
$$D=\alpha E_1+\sum_{i=5}^8 a_il_i + bQ+c_1C_2+c_2C_3+c_3C_4+\Omega,$$ where the support of $\Omega$ does not contain  $E_1,l_5,l_6,l_7,l_8,Q,C_2,C_3$ and $C_4$. Then the inequality

$$9+12a_1+6s_A-3a_2-3a_3-3a_4=(-K_S+Q+l_5+l_6+l_7+l_8+C_2+C_3+C_4)\cdot D\geq 12\alpha$$
holds. But it is absurd.
\end{proof}

\begin{lemma}\label{sR}
Assume that $1\leq s_A\leq 4$. If the inequality $1+2a_5\geq a_2+a_3+a_4$ holds or both $1+2a_5< a_2+a_3+a_4$ and $\frac{2}{3}+\frac{1}{3}s_A\geq a_2+a_3+a_4$ are satisfied, then $\alpha_c(S,\mu A)=\frac{3}{2+3a_1+s_A}$.
\end{lemma}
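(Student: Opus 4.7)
The plan is to follow the same template as Lemmas \ref{s>4}, \ref{s23}, and \ref{s234}: first produce an explicit effective divisor $D \sim_{\mathbb{Q}} \mu A$ whose multiplicity along $E_1$ is $\tfrac{2+3a_1+s_A}{3}$, and then establish a matching intersection-theoretic upper bound on the maximal multiplicity.

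For the explicit divisor, my ansatz is
$$\mu A \sim_{\mathbb{Q}} \frac{2+3a_1+s_A}{3}\, E_1 + \sum_{i=2}^{8} \alpha_i C_i + \sum_{i=2}^{8} \beta_i l_i + \sum_{i=2}^{8} \gamma_i E_i.$$
Matching the coefficients of $h$ and of $e_1$ in the Picard group forces $\sum_{i=2}^8 \alpha_i = \tfrac{4-s_A}{3}$ and $\sum_{i=2}^8 \beta_i = s_A - 1$, both non-negative because $1 \leq s_A \leq 4$. Setting $\gamma_i = 0$ then forces $\beta_i = \alpha_i - a_i + \tfrac{s_A-1}{3}$, so the problem reduces to choosing $\alpha_i \geq \max\bigl(0,\, a_i - \tfrac{s_A-1}{3}\bigr)$ with $\sum_{i=2}^8 \alpha_i = \tfrac{4-s_A}{3}$. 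Setting $I = \{i : a_i > \tfrac{s_A-1}{3}\} \subseteq \{2,\ldots,8\}$, this is equivalent to $\sum_{i \in I} a_i \leq \tfrac{4 - s_A + |I|(s_A-1)}{3}$. For $|I| \geq 4$ the inequality is automatic from $s_A \geq 1$, and for $|I| \leq 1$ it is trivial. The non-trivial cases $|I| = 2, 3$ reduce to bounding $a_2 + a_3$ or $a_2 + a_3 + a_4$ from above; here the hypothesis of the lemma, combined with $a_5 \leq \tfrac{s_A-1}{3}$ (forced by $|I| \leq 3$ and the ordering $a_2 \geq \cdots \geq a_8$), delivers exactly the required estimate. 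For instance, in the $|I|=3$ case $1+2a_5 \geq a_2+a_3+a_4$ together with $a_5 \leq \tfrac{s_A-1}{3}$ yields $a_2+a_3+a_4 \leq \tfrac{2s_A+1}{3}$, which is the required bound.

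For the matching upper bound, take any effective $D \sim_{\mathbb{Q}} \mu A$ attaining maximal multiplicity $\alpha$ along an irreducible curve. Since $\tfrac{2 + s_A + 2a_1 - a_7 - a_8}{3} \leq \tfrac{2+3a_1+s_A}{3}$ in the birational case, Lemma \ref{mult_curve} forces this curve to be a $\pi$-exceptional $E_j$, and by the ordering of the $a_i$'s it suffices to treat $E_1$. Pick any $i \in \{2,\ldots,8\}$ and write $D = \alpha E_1 + b_1 l_i + b_2 C_i + \Omega$ with $\Omega$ supported off $\{E_1, l_i, C_i\}$. The crucial computation is $l_i \cdot C_i = 1$, so that $(l_i + C_i) \cdot l_i = (l_i + C_i) \cdot C_i = 0$; hence $(l_i + C_i) \cdot D \geq 3\alpha$. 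Direct calculation in the Picard group gives $l_i + C_i \sim 4h - 3e_1 - \sum_{j=2}^{8} e_j$ and therefore $(l_i + C_i) \cdot \mu A = 2 + 3a_1 + s_A$. Combining yields $\alpha \leq \tfrac{2+3a_1+s_A}{3}$, contradicting any $\eta < \tfrac{3}{2+3a_1+s_A}$ for which $(S, \eta D)$ fails to be log canonical along $E_1$.

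The main obstacle is the coefficient bookkeeping in the construction step: verifying $\sum_{i \in I}(a_i - \tfrac{s_A-1}{3}) \leq \tfrac{4-s_A}{3}$ in the borderline cases $|I| = 2, 3$ requires a careful translation of the two branches of the hypothesis into the desired form. The intersection-theoretic bound, by contrast, is a single clean calculation driven by the identity $(l_i + C_i)^2 = 0$, which is the natural pairing to isolate the $E_1$ contribution in any decomposition of $\mu A$.
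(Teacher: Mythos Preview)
Your proof is correct and considerably streamlined compared to the paper's. The two differ in both halves of the argument.

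\textbf{Construction of the effective divisor.} The paper proceeds by a five-way case split, governed by the chain of inequalities $1+ka_{k+2}$ versus $a_2+\cdots+a_{k+1}$ for $k=2,3,4,5$; in each case it writes down an explicit combination of the $C_i$, $l_i$ (and sometimes $Q$), checks positivity of the coefficients, and then adjusts by subtracting off a piece $\sum b_iC_i$ and adding back $\sum c_jl_j$ to hit the target multiplicity $\frac{2+3a_1+s_A}{3}$ along $E_1$. You instead write a single linear ansatz in the $C_i,l_i$ and reduce feasibility to the inequality $\sum_{i\in I}\bigl(a_i-\tfrac{s_A-1}{3}\bigr)\le \tfrac{4-s_A}{3}$, dispatching all cases uniformly by the size of $I$. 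This is both shorter and more transparent; the paper's cases are, in hindsight, exactly the thresholds at which $|I|$ changes.

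\textbf{Upper bound on the multiplicity.} The paper intersects with the class $l_2+C_2+Z$, records the value as $4+6a_1+3s_A$, and concludes $\alpha\le\tfrac{2+3a_1+s_A}{3}$. In fact $(l_2+C_2+Z)\cdot\mu A=3+6a_1+3s_A$, and either value only yields $\alpha\le\tfrac{1+2a_1+s_A}{2}$, which is strictly weaker than the target once $s_A>1$; so the paper's computation appears to contain an arithmetic slip. Your choice of the test class $l_i+C_i\sim 4h-3e_1-\sum_{j\ge2}e_j$ avoids this entirely: the identities $(l_i+C_i)\cdot l_i=(l_i+C_i)\cdot C_i=0$ and $(l_i+C_i)\cdot E_1=3$ give exactly $3\alpha\le (l_i+C_i)\cdot\mu A=2+3a_1+s_A$, which is the sharp bound. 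This is the natural test class and makes the argument a one-liner.
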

\begin{proof}
Suppose that inequalities $1+a_4\geq a_2+a_3$, $1+2a_5< a_2+a_3+a_4$,  and $\frac{2}{3}+\frac{1}{3}s_A\geq a_2+a_3+a_4$ hold. Then consider a divisor $D$ such that
\begin{eqnarray*}
D=\frac{1-a_3-a_4+a_2}{2}C_2 +\frac{1-a_2-a_4+a_3}{2}C_3+\frac{1-a_2-a_3+a_4}{2}C_4\\
+\sum_{i=5}^8\frac{a_2+a_3+a_4-2a_i-1}{2}l_i.\\
\end{eqnarray*}
By our assumption, sum of coefficients of $C_i$'s, $\frac{3-a_2-a_3-a_4}{2}$ is bigger than $\frac{4-s_A}{3}$,
there exists numbers $b_2,\ldots,b_4, c_2,\ldots,c_8$ satisfying following conditions
\begin{itemize}
\item[$\bullet$] all coefficients of the curves $C_i$'s in $D-b_2C_2-b_3C_3-b_4C_4$ are nonnegative,

\item[$\bullet$] $b_2+b_3+b_4=\frac{3-a_2-a_3-a_4}{2}-\frac{4-s_A}{3}$,
\item[$\bullet$] $\sum_{i=2}^8 c_i=6(b_2+b_3+b_4)$.
\end{itemize}
Then we have
\begin{eqnarray*}
\mu A\sim_{\mathbb{Q}}  D-b_2C_2-b_3C_3-b_4C_4+\sum_{i=2}^8 c_il_i+\frac{2+3a_1+s_A}{3}E_1
\end{eqnarray*}
 and all the coefficients of the divisor are nonnegative.

If inequalities $1+3a_6< a_2+a_3+a_4+a_5$ and $1+2a_5\geq a_2+a_3+a_4$ hold,
consider a divisor $D$ such that
\begin{eqnarray*}
D=\frac{1-a_3-a_4-a_5+2a_2}{3}C_2+\frac{1-a_2-a_4-a_5+2a_3}{3}C_3+\frac{1-a_2-a_3-a_5+2a_4}{3}C_4\\+\frac{1-a_2-a_3-a_4+2a_5}{3}C_5
+\sum_{i=6}^8 \frac{a_2+a_3+a_4+a_5-3a_i-1}{3}l_i.
\end{eqnarray*}
Since sum of coefficients of $C_i$'s, $\frac{4-a_2-a_3-a_4-a_5}{3}$ is bigger than $\frac{4-s_A}{3}$,
there exists numbers $b_2,\ldots,b_5, c_2,\ldots,c_8$ satisfying following conditions
\begin{itemize}
\item[$\bullet$] all coefficients of the curves $C_i$'s in $D-b_2C_2-b_3C_3-b_4C_4-b_5C_5$ are nonnegative,

\item[$\bullet$] $b_2+b_3+b_4+b_5=\frac{4-a_2-a_3-a_4-a_5}{3}-\frac{4-s_A}{3}$,
\item[$\bullet$] $\sum_{i=2}^8 c_i=6(b_2+b_3+b_4+b_5)$.
\end{itemize}
Then we have
\begin{eqnarray*}
\mu A\sim_{\mathbb{Q}}  D-b_2C_2-b_3C_3-b_4C_4-b_5C_5+\sum_{i=2}^8 c_il_i+\frac{2+3a_1+s_A}{3}E_1
\end{eqnarray*}
 and all the coefficients of the divisor are nonnegative.

If inequalities $1+4a_7< a_2+a_3+a_4+a_5+a_6$ and $1+3a_6\geq a_2+a_3+a_4+a_5$ hold, then consider a divisor $D$
\begin{eqnarray*}
D=\frac{1-a_3-a_4-a_5-a_6+3a_2}{4}C_2+\frac{1-a_2-a_4-a_5-a_6+3a_3}{4}C_3\\+\frac{1-a_2-a_3-a_5-a_6+3a_4}{4}C_4
+\frac{1-a_3-a_4-a_5-a_6+3a_5}{4}C_5\\+\frac{1-a_2-a_3-a_4-a_5+3a_6}{4}C_6
+\sum_{i=7}^8 \frac{a_2+a_3+a_4+a_5+a_6-4a_i-1}{4}l_i.
\end{eqnarray*}

The sum of coefficients of $C_i$'s, $\frac{5-a_2-a_3-a_4-a_5-a_6}{4}$ is bigger than $\frac{4-s_A}{3}$ provided by $s_A\geq 1$.
In same manner as a previous case, there is a coefficient  $b_2,\ldots,b_6,c_2,\ldots,c_8$    satisfying
\begin{eqnarray*}
\mu A\sim_{\mathbb{Q}}  D-b_2C_2-b_3C_3-b_4C_4-b_5C_5-b_6C_6+\sum_{i=2}^8 c_il_i+\frac{2+3a_1+s_A}{3}E_1.
\end{eqnarray*}

If both inequalities $1+5a_8< a_2+a_3+a_4+a_5+a_6+a_7 $ and $1+4a_7< a_2+a_3+a_4+a_5+a_6$ hold, then consider a divisor $D$
\begin{eqnarray*}
D=\frac{1-a_3-a_4-a_5-a_6-a_7+4a_2}{5}C_2+\frac{1-a_2-a_4-a_5-a_6-a_7+4a_3}{5}C_3\\+\frac{1-a_2-a_3-a_5-a_6-a_7+4a_4}{5}C_4
+\frac{1-a_3-a_4-a_5-a_6-a_7+4a_5}{5}C_5\\+\frac{1-a_2-a_3-a_4-a_5-a_7+4a_6}{5}C_6+\frac{1-a_2-a_3-a_4-a_5-a_6+4a_7}{5}C_7
\\+ \frac{a_2+a_3+a_4+a_5+a_6+a_7-5a_8-1}{5}l_8.
\end{eqnarray*}
In same manner as a previous case, there are coefficients  $b_2,\ldots,b_7,c_2\ldots,c_8$    satisfying
\begin{eqnarray*}
\mu A\sim_{\mathbb{Q}}  D-b_2C_2-b_3C_3-b_4C_4-b_5C_5-b_6C_6-b_7C_7+\sum_{i=2}^8 c_il_i+\frac{2+3a_1+s_A}{3}E_1.
\end{eqnarray*}

In remaining cases, $1+5a_8\geq a_2+a_3+a_4+a_5+a_6+a_7 $, consider an effective divisor $D=\sum_{i=2}^8\frac{1-s_A+6a_i}{6}C_i$. By choosing suitable choice of $b_2,\ldots,b_8,c_2,\ldots,c_8$. We have an effective divisor
 \begin{eqnarray*}
\mu A\sim_{\mathbb{Q}}  D-\sum_{i=2}^8b_iC_i+\sum_{j=2}^8 c_jl_j+\frac{2+3a_1+s_A}{3}E_1.
\end{eqnarray*}

Therefore we obtain that $\alpha_c(S,\mu A)\leq\frac{3}{2+3a_1+s_A}$.

Now for $\eta<\frac{3}{2+3a_1+s_A}$, suppose that the  pair $(S,\eta D)$ is not log canonical along a irreducible curve $C$, where the effective divisor $D$ is $\mathbb{Q}$-linearly equivalent to $\mu A$. Then we write $D=\alpha C+\Omega$, where the support of $\Omega$ does not contain  $C$. Since the inequality $\frac{2+s_A+2a_1-a_7-a_8+3a}{3}\leq \frac{2+3a_1+s_A}{3}$ holds, the curve $C$ is one of $E_1$ by Lemma \ref{mult_curve}.

Write $$D=\alpha E_1+al_2 +bC_2+cZ+\Omega,$$ where the support of $\Omega$ does not contain $E_1,l_2,C_2$ and $Z$. Then an inequality

$$4+6a_1+3s_A=(l_2+C_2+Z)\cdot D\geq 6\alpha$$
holds. But it is a contradiction so that $\alpha_c(S,\mu A)=\frac{3}{2+3a_1+s_A}$.

\end{proof}

\begin{lemma}
 If  $ s_A\leq 1$, then $\alpha_c(S,\mu A)=\mathrm{min}\{\frac{2}{1+2a_1+s_A},1\}$.
\end{lemma}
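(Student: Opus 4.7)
The strategy parallels the preceding lemmas: first I exhibit an effective $\mathbb{Q}$-divisor equivalent to $\mu A$ realizing the claimed maximum multiplicity along a curve (giving the upper bound on $\alpha_c$), and then I bound the multiplicity from above via an intersection argument using Lemma \ref{mult_curve} (giving the lower bound on $\alpha_c$). The presence of the $\min$ with $1$ forces a case split on whether $2a_1+s_A\ge 1$.

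For the construction, when $2a_1+s_A\ge 1$ the claimed maximum multiplicity $\tfrac{1+2a_1+s_A}{2}$ is at least $1$, and the key identity is
\[
\mu A\;\sim_{\mathbb{Q}}\;\tfrac{1+2a_1+s_A}{2}\,E_1\;+\;\sum_{i=2}^{8}a_i\,C_i\;+\;\tfrac{1-s_A}{2}\,Z,
\]
which I verify by expanding in the basis $h,e_1,\ldots,e_8$: the $h$-coefficient is $3s_A+3(1-s_A)=3$; the $e_1$-coefficient simplifies to $a_1-1$; and for $k\ge 2$ the $e_k$-coefficient is $-(s_A-a_k)-(1-s_A)=a_k-1$, matching $\mu A=3h-(1-a_1)e_1-\sum_{k\ge 2}(1-a_k)e_k$. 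All coefficients are non-negative precisely because $s_A\le 1$ (which makes $\tfrac{1-s_A}{2}\ge 0$), so $\alpha_c\le\tfrac{2}{1+2a_1+s_A}$. In the complementary regime $2a_1+s_A<1$, I pick an irreducible $\Gamma\in|-K_S|$ (a smooth elliptic curve, available since $|-K_S|$ on $S$ has a unique base point and general members are irreducible) and write $\mu A\sim_{\mathbb{Q}}\Gamma+a_1E_1+\sum_{i=2}^{8}a_iE_i$, whose leading coefficient is $1$, so $\alpha_c\le 1$.

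For the lower bound, I suppose for contradiction that $(S,\eta D)$ is not log canonical along some irreducible curve $C$ for some $\eta<\min\{\tfrac{2}{1+2a_1+s_A},1\}$ and some effective $D\sim_{\mathbb{Q}}\mu A$. Writing $D=\alpha C+\Omega$ with $\Omega\ge 0$ not containing $C$, we have $\alpha>1/\eta>1$, so Lemma \ref{mult_curve} applies. A short comparison (using $a_7+a_8\le s_A$ and splitting on $2a_1+s_A\gtrless 1$) shows that the alternative bound $(2+s_A+2a_1-a_7-a_8)/3$ lies below $\max\{\tfrac{1+2a_1+s_A}{2},1\}$, so $C$ must be a $\pi$-exceptional curve $E_j$. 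By the ordering $a_1\ge\cdots\ge a_8$ and the analogous decomposition at each $E_j$, it suffices to treat $C=E_1$. I then refine to $D=\alpha E_1+al_2+bC_2+cZ+\Omega$ with $\Omega$ effective avoiding $\{E_1,l_2,C_2,Z\}$, and intersect with $l_2+C_2+Z$. Using $l_2\cdot E_1=1$, $C_2\cdot E_1=2$, $Z\cdot E_1=3$, the self-intersections all equal $-1$, and the cross-intersections $l_2\cdot C_2=l_2\cdot Z=1$, $C_2\cdot Z=0$, one obtains
\[
(l_2+C_2+Z)\cdot D\;=\;6\alpha+a+\Omega\cdot(l_2+C_2+Z)\;\ge\;6\alpha,
\]
while a direct computation gives $(l_2+C_2+Z)\cdot\mu A=3+6a_1+3s_A$. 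Hence $\alpha\le\tfrac{1+2a_1+s_A}{2}$, contradicting $\alpha>1/\eta\ge\tfrac{1+2a_1+s_A}{2}$ in the first regime, or directly contradicting $\alpha>1\ge\tfrac{1+2a_1+s_A}{2}$ in the second regime.

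The main obstacle is guessing the correct decomposition in the first case: one must combine the $C_i$'s (each meeting $E_1$ with intersection $2$) and the sextic $Z$ (meeting $E_1$ three times) in the specific proportions $a_i$ and $\tfrac{1-s_A}{2}$, and the hypothesis $s_A\le 1$ is precisely what makes the $Z$-coefficient non-negative. The intersection step is formal, but the clean lower bound $\ge 6\alpha$ relies on the cancellation $(l_2+C_2+Z)\cdot(al_2)=a(-1+1+1)=a\ge 0$ coming from the specific incidence pattern of $l_2,C_2,Z$.
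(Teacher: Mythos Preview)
Your argument is correct and follows essentially the same template as the paper: the upper-bound decomposition $\mu A\sim_{\mathbb{Q}}\tfrac{1-s_A}{2}Z+\sum_{i\ge 2}a_iC_i+\tfrac{1+2a_1+s_A}{2}E_1$ is exactly the paper's, and the reduction to $C=E_1$ via Lemma~\ref{mult_curve} is the same. Two minor variations are worth noting: for the intersection step you test against $l_2+C_2+Z$ (the same package used in Lemma~\ref{sR}) whereas the paper here uses $-K_S+Z+\sum_{i=2}^8 C_i$, but both yield the identical bound $\alpha\le\tfrac{1+2a_1+s_A}{2}$; and you add the $\Gamma\in|-K_S|$ construction to pin down $\alpha_c\le 1$ in the regime $2a_1+s_A<1$, which is a point the paper leaves implicit.
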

\begin{proof}
There exists an effective divisor

\begin{eqnarray*}
\mu A\sim_{\mathbb{Q}}  \frac{1-s_A}{2}Z+\sum_{i=2}^8a_iC_i+\frac{1+2a_1+s_A}{2}E_1.
\end{eqnarray*}
Therefore we obtain that $\alpha_c(S,\mu A)\leq\mathrm{min}\{\frac{2}{1+2a_1+s_A},1\}$.

For $\eta<\frac{2}{1+2a_1+s_A}$, suppose that the  pair $(S,\eta D)$ is not log canonical along a irreducible curve $C$, where the effective divisor $D$ is $\mathbb{Q}$-linearly equivalent to $\mu A$. Then we write $D=\alpha C+\Omega$, where the support of $\Omega$ does not contain  $C$. Since the inequality $\frac{2+s_A+2a_1-a_7-a_8+3a}{3}\leq \frac{1+2a_1+s_A}{2}$ holds, the curve $C$ is one of $E_1$ by Lemma \ref{mult_curve}.

If we write an effective divisor $D$ as
$D=\alpha E_1+bZ+\sum_{i=2}^8c_iC_i+\Omega$, where the support of $\Omega$ does not contain $E_1, C_2,\ldots,C_8$ and $Z$, then we have the following inequality
\begin{eqnarray*}
9+18a_1+9s_A=(-K_S+Z+\sum_{i=2}^8C_i)\cdot D\geq 18\alpha.
\end{eqnarray*}

Thus $\alpha_c(S,\mu A)=\mathrm{min}\{\frac{2}{1+2a_1+s_A},1\}$.

\end{proof}

Stating with previous lemmas together, we have the following proposition.

\begin{proposition}\label{alphaCbirational}
Let $S$ be a smooth del Pezzo surface and $A$ be a ample divisor of $S$.  If the morphism $\phi$ is birational, $\mu A\sim_{\mathbb{Q}} -K_{S}+\sum_{i=1}^8a_iE_i $, then

%$$\alpha_c(S,A)=\mu(\mathrm{Max}\{ \frac{1}{2+a_1},\frac{2}{2+2a_1+s_A-a_2-a_3},\frac{4}{3+4a_1+2s_A-a_2-a_3-a_4},%\frac{3}{2+3a_1+s_A}, \mathrm{min}\{\frac{2}{1+2a_1+s_A},1\}\}) $$
\begin{itemize}
\item[$\bullet$]When $s_A>4$,  $\alpha_c(S,A)=\frac{1}{2+a_1}$;
\item[$\bullet$]When $4\geq s_A>1$, $\alpha_c(S,A)=\mathrm{Max}\{\frac{2}{2+2a_1+s_A-a_2-a_3},\frac{4}{3+4a_1+2s_A-a_2-a_3-a_4},\frac{3}{2+3a_1+s_A}\}$;
\item[$\bullet$]When $1\geq s_A$,  $\alpha_c(S,A)= \mathrm{min}\{\frac{2}{1+2a_1+s_A},1\}$.
\end{itemize}

%$$
%\alpha_c(S,A)=\left\{%
%\aligned
%&\frac{1}{2+a_1},
%\frac{2}{2+2a_1+s_A-a_2-a_3},\frac{4}{3+4a_1+2s_A-a_2-a_3-a_4},\\
%&\frac{3}{2+3a_1+s_A}, \mathrm{min}\{\frac{2}{1+2a_1+s_A},1\}\\
%\endaligned\right.%
%$$

\end{proposition}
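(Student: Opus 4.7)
The plan is to assemble the proposition directly from the four lemmas of this subsection, since the three regimes $s_A>4$, $1<s_A\leq 4$, and $s_A\leq 1$ exactly mirror the three bullet points in the statement. In each regime the previous lemmas already produce an explicit upper bound on $\alpha_c(S,\mu A)$ via an effective divisor and a matching lower bound via the intersection argument that uses Lemma \ref{mult_curve} to reduce to multiplicity along the exceptional curve $E_1$. So conceptually no new machinery is needed: the work is to glue the cases together and reconcile the common factor $\mu$.

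First I would handle the boundary regimes. For $s_A>4$, Lemma \ref{s>4} yields $\alpha_c(S,\mu A)=\frac{1}{2+a_1}$, and because $\alpha_c(S,A)=\mu\cdot\alpha_c(S,\mu A)$ and $\mu A\sim_\mathbb{Q}-K_S+\sum a_iE_i$ is the normalization in which the formula is stated, this gives the first bullet. For $s_A\leq 1$ the final lemma of the subsection yields $\alpha_c(S,\mu A)=\min\bigl\{\frac{2}{1+2a_1+s_A},1\bigr\}$, which is the third bullet.

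For the middle range $1<s_A\leq 4$, I would observe that Lemmas \ref{s23}, \ref{s234}, and \ref{sR} cover three disjoint and exhaustive sub-cases, governed by the two threshold inequalities $1+a_4\gtrless a_2+a_3$ and $1+2a_5\gtrless a_2+a_3+a_4$ (and the auxiliary thresholds $\tfrac{2}{3}+\tfrac{1}{3}s_A\gtrless a_2+a_3$, $\tfrac{1}{3}+\tfrac{2}{3}s_A\gtrless a_2+a_3+a_4$). In each sub-case the relevant lemma gives a concrete value among $\frac{2}{2+2a_1+s_A-a_2-a_3}$, $\frac{4}{3+4a_1+2s_A-a_2-a_3-a_4}$, and $\frac{3}{2+3a_1+s_A}$. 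To conclude the middle bullet I need to verify that the value produced by the applicable lemma equals the maximum of these three candidates under that sub-case's hypotheses; the other two candidates are then automatically dominated because they correspond to effective divisors with strictly smaller multiplicity along $E_1$ under the relevant inequalities (so they are valid but non-sharp upper bounds on $\alpha_c(S,\mu A)$, which means their reciprocals are smaller after scaling).

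The only genuine obstacle is the last reconciliation step: verifying the three two-way fraction comparisons in each sub-case. These are routine algebraic manipulations — each reduces, after clearing denominators, to an inequality linear in $a_1,\ldots,a_8,s_A$ that follows from one of the sub-case hypotheses (for instance, $\frac{2}{2+2a_1+s_A-a_2-a_3}\geq\frac{3}{2+3a_1+s_A}$ is equivalent to $a_2+a_3\geq\tfrac{2}{3}+\tfrac{1}{3}s_A$, which is exactly the hypothesis of Lemma \ref{s23}). Once these comparisons are tabulated, the Max expression in the middle bullet is seen to agree with the lemma-by-lemma output, completing the proof.
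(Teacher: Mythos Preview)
Your proposal is correct and follows essentially the same route as the paper: the paper's own ``proof'' of this proposition is the single sentence ``Stating with previous lemmas together, we have the following proposition,'' i.e.\ it is declared as an immediate consequence of Lemmas~\ref{s>4}, \ref{s23}, \ref{s234}, \ref{sR}, and the lemma for $s_A\leq 1$. Your additional observation---that the crossover thresholds $a_2+a_3=\tfrac{2}{3}+\tfrac{1}{3}s_A$ and $a_2+a_3+a_4=\tfrac{1}{3}+\tfrac{2}{3}s_A$ are exactly where pairs of the three fractions in the middle bullet coincide, so that in each sub-case the applicable lemma's value equals the $\mathrm{Max}$---is a genuine clarification that the paper leaves implicit.
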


\subsection{Conic bundle case} Suppose that the contraction
 $\phi:S\to Z$ given by the face $\Delta_{(S,A)}$ is a conic
bundle, i.e., $Z=\mathbb{P}^1$.  The face $\Delta_{(S,A)}$ is spanned by an irreducible fiber $B$ of $\phi$ and $8$ disjoint $(-1)$-curves
$E_1 \ldots,E_7$.
We may then write
$$
K_{S}+\mu A\sim_{\mathbb{Q}} aB+\sum_{i=1}^{7}a_iE_i,
$$
where $a$ is a positive
rational number and $a_i$'s are non-negative rational numbers.
And we can assume that $a_1\geq  \cdots \geq a_7$.
 Let $\phi_1:
S\to R$ be the birational morphism obtained by contracting the
disjoint $(-1)$-curves $E_1,\ldots, E_7$.
And we can assume that $a_1\geq  \cdots \geq a_7$.
\medskip

\textbf{Subcase 1:}  $R$ is isomorphic to the Hirzebruch surface $\mathbb{F}_1$.

In this subcase, we have an extra $(-1)$-curve $E_8$ which
is disjoint from $E_1,\ldots,E_7$.
Assume that $s_A\geq 4$. Since $\frac{2+s_A+2a_1-a_7-a_8+3a}{3}\leq 2+a_1+a$, by the similar way of Lemma $\ref{s>4}$, we obtain that the multiplicity of $\mu A$ along $E_1$ has upper bound  $2+a_1+a$,
since if we write $\mu A=\alpha E_1+\sum_{k=2}^7 E_k+\Omega$, where the support of $\Omega$ does not contain $E_1,\ldots,E_7$, we have that $$14+7a_1+7a=(\sum_{i=2}^8l_i+\sum_{j=2}^8E_j)\cdot \mu A \geq 7\alpha.$$
There exist an effective divisor  $$\mu A\sim_{\mathbb{Q}}(1-a_2)l_2+\cdots +
(1-a_7)l_7+(s_A-3+a)l_8+(2+a_1+a)E_1+(s_A-4)E_8,$$ which is a sum of divisors  $a(l_8+E_8)$ and an effective divisor obtained in Lemma \ref{s>4}. Thus $\alpha_c(S,A)= \frac{1}{2+a_1+a}$.

  When $1\leq s_A \leq 4$.    Likewise we obtain upper bounds,  $\mathrm{mult}_{E_1}(\mu A)\leq\frac{2+2a_1+s_A-a_2-a_3+2a}{2}$, $ \mathrm{mult}_{E_1}(\mu A)\leq \frac{3+4a_1+2s_A-a_2-a_3-a_4+4a}{4}$ and $\mathrm{mult}_{E_1}(\mu A)\leq\frac{2+3a_1+s_A+3a}{3}$ by the similar way   of Lemma \ref{s23}, \ref{s234} and \ref{sR}, respectively.
And  effective divisors which is a sum of divisors $a(l_8+E_8)$ and an effective divisors constructed in  Lemma \ref{s23}, \ref{s234} and \ref{sR} give upper bound of $\alpha_c(S,\mu A)$.

When $s_A\leq 1$,
write $D=\alpha E_1+bZ+cl_8+\Omega$, where the support of $\Omega$ does not contain $E_1, l_8$ and $Z$, then we have
\begin{eqnarray*}
2+4a_1+2s_A+4a=(Z+l_8)\cdot D\geq 4\alpha.
\end{eqnarray*}
There is an effective divisor
 \begin{eqnarray*}
\mu A\sim_{\mathbb{Q}}  \frac{1-s_A}{2}Z+\sum_{i=2}^8a_iC_i+\frac{1+2a_1+s_A+2a}{2}E_1+al_8.
\end{eqnarray*}

Finally we obtain the following statement.

\begin{proposition}\label{alphaCcobdleF}
Let $S$ be a smooth del Pezzo surface and $A$ be a ample divisor of $S$.  If the morphism $\phi$ is a conic bundle, $\mu A\sim_{\mathbb{Q}} -K_{S}+aB+\sum_{i=1}^7a_iE_i $ and it  factors through  the Hirzebruch surface $\mathbb{F}_1$,  then

%$$\alpha_c(S,A)=\mu(\mathrm{Max}\{ \frac{1}{2+a_1},\frac{2}{2+2a_1+s_A-a_2-a_3},\frac{4}{3+4a_1+2s_A-a_2-a_3-a_4},%\frac{3}{2+3a_1+s_A}, \mathrm{min}\{\frac{2}{1+2a_1+s_A},1\}\}) $$
\begin{itemize}
\item[$\bullet$]When $s_A>4$,  $\alpha_c(S,A)=\frac{1}{2+a_1+a}$;
\item[$\bullet$]When $4\geq s_A>1$,\\ $\alpha_c(S,A)=\mathrm{Max}\{\frac{2}{2+2a_1+s_A-a_2-a_3+2a},\frac{4}{3+4a_1+2s_A-a_2-a_3-a_4+4a},\frac{3}{2+3a_1+s_A+3a}\}$;
\item[$\bullet$]When $1\geq s_A$,  $\alpha_c(S,A)= \mathrm{min}\{\frac{2}{1+2a_1+s_A+2a},1\}$.
\end{itemize}

%$$
%\alpha_c(S,A)=\left\{%
%\aligned
%&\frac{1}{2+a_1},
%\frac{2}{2+2a_1+s_A-a_2-a_3},\frac{4}{3+4a_1+2s_A-a_2-a_3-a_4},\\
%&\frac{3}{2+3a_1+s_A}, \mathrm{min}\{\frac{2}{1+2a_1+s_A},1\}\\
%\endaligned\right.%
%$$

\end{proposition}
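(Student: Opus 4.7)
The plan is to run the arguments of Lemmas \ref{s>4}, \ref{s23}, \ref{s234}, and \ref{sR} essentially verbatim, modifying them to account for the extra $aB$-contribution and the additional $(-1)$-curve $E_8$ disjoint from $E_1,\ldots,E_7$ that appears in the $\mathbb{F}_1$ subcase. Since $a_8 = 0$ in the conic bundle case, Lemma \ref{mult_curve} still gives that the maximal multiplicity of $\mu A$ along an irreducible curve is either attained on a $\pi$-exceptional curve or bounded by $\tfrac{2+s_A+2a_1-a_7-a_8+3a}{3}$; one checks directly that this latter quantity is dominated by each of the four candidate maxima appearing in the statement. Hence the maximal multiplicity is attained on some $E_i$ and, by the ordering $a_1 \geq \cdots \geq a_7$, may be assumed to equal $\mathrm{mult}_{E_1}(\mu A)$.

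To produce the upper bound on $\alpha_c(S,\mu A)$ (i.e.\ the lower bound on $\mathrm{mult}_{E_1}$), I would take each effective divisor exhibited in the corresponding birational-case lemma and add to it the positive combination $a(l_8+E_8) \sim aB$. Since $a>0$ and $l_8, E_8$ are effective, all coefficients remain non-negative, and the coefficient of $E_1$ acquires the advertised $a$-shift: $2+a_1+a$ when $s_A>4$; the three expressions $\tfrac{2+2a_1+s_A-a_2-a_3+2a}{2}$, $\tfrac{3+4a_1+2s_A-a_2-a_3-a_4+4a}{4}$, $\tfrac{2+3a_1+s_A+3a}{3}$ in the middle range; and $\tfrac{1+2a_1+s_A+2a}{2}$ when $s_A \leq 1$.

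For the matching upper bound on $\mathrm{mult}_{E_1}(\mu A)$, I would write $\mu A = \alpha E_1 + \Omega$ and intersect with the same test curves used in the birational analogue; the novelty is that these test curves meet $B$ non-trivially, so that $\mu A\cdot (\text{test curves})$ picks up an additional contribution from $aB\cdot (\text{test curves})$. For example, in the $s_A>4$ range, intersecting with $\sum_{i=2}^8 l_i + \sum_{j=2}^8 E_j$ yields $14 + 7a_1 + 7a \geq 7\alpha$, so $\alpha \leq 2+a_1+a$; the three remaining ranges are handled analogously and match the divisor-construction bounds exactly.

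The main technical point will be to verify, in every subcase, that the coefficients of the explicitly constructed effective divisors remain non-negative after incorporating the $a$-shift (and, in the middle range, after the additional choice of auxiliary constants $b_i, c_i$ as in the proof of Lemma \ref{sR}). The $\min\{\cdot,1\}$ clause in the $s_A \leq 1$ case arises because $E_1$ is reduced, so the pair $(S,\eta D)$ is automatically log canonical along $E_1$ for $\eta \leq 1$ whenever $\mathrm{mult}_{E_1}(D) \leq 1$.
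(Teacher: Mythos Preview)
Your overall plan matches the paper's approach: add an effective representative of $aB$ to the divisors constructed in Lemmas~\ref{s>4}--\ref{sR} and re-run the intersection bounds. Two points need fixing.

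First, $l_8+E_8$ is \emph{not} linearly equivalent to $B$: one has $l_8+E_8=(h-e_1-e_8)+e_8=h-e_1$, whereas the fiber class is $B=h-e_8$. The reducible fiber of $\phi$ containing $E_1$ is $l_8+E_1$, and it is the decomposition $aB\sim a(l_8+E_1)$ that produces the advertised $a$-shift in the $E_1$-coefficient. (The paper's prose contains the same slip, but its displayed effective divisors are consistent with $l_8+E_1$.) With $l_8+E_8$ as you wrote it, the resulting divisor is neither $\mathbb{Q}$-linearly equivalent to $\mu A$ nor does its $E_1$-coefficient change.

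Second, your assertion that the birational test curves give the sharp upper bound in every range fails for $s_A\le 1$. With the test divisor $T=-K_S+Z+\sum_{i=2}^{8}C_i$ used in the birational lemma one computes $T\cdot B=21$ while $T\cdot E_1=18$, so the inequality $T\cdot\mu A\ge 18\alpha$ only yields
\[
\alpha\ \le\ \frac{1+2a_1+s_A}{2}+\frac{7a}{6},
\]
strictly weaker than the required $\tfrac{1+2a_1+s_A+2a}{2}$. Here the paper replaces $T$ by the pair $Z+l_8$, for which $(Z+l_8)\cdot E_1=(Z+l_8)\cdot B=4$; writing $D=\alpha E_1+bZ+cl_8+\Omega$ then gives $2+4a_1+2s_A+4a\ge 4\alpha$, which matches the effective-divisor construction exactly. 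So the ``handled analogously'' clause needs this modification in the bottom range.
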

\bigskip

\textbf{Subcase 2:} $R$ is isomorphic to $\mathbb{P}^1\times\mathbb{P}^1$.

Use the notation $s_{Am}=s_A-a_m$ so that $s_{A7}\geq \cdots \geq s_{A1}$.
The present subsection shows

\begin{proposition}\label{alphaCcobdleP}
Let $S$ be a smooth del Pezzo surface and $A$ be a ample divisor of $S$.  If the morphism $\phi$ is a conic bundle, $\mu A\sim_{\mathbb{Q}} -K_{S}+aB+\sum_{i=1}^7a_iE_i $ and it  factors through   $\mathbb{P}^1\times\mathbb{P}^1$,  then

\begin{itemize}
\item[$\bullet$]When $s_A>4$,  $\alpha_c(S,A)=\frac{1}{2+a_1+a}$;
\item[$\bullet$]When $4\geq s_A>1$, $\alpha_c(S,A)=\mathrm{Max}\{\frac{2}{2+s_{A7}-a_2-a_3+2a},\frac{4}{3+2s_{A7}-a_2-a_3-a_4+4a},\frac{3}{2+s_{A7}+3a}\}$;
\item[$\bullet$]When $1\geq s_A$,  $\alpha_c(S,A)= \mathrm{min}\{\frac{2}{1+s_{A7}+2a},1\}$.
\end{itemize}

%$$
%\alpha_c(S,A)=\left\{%
%\aligned
%&\frac{1}{2+a_1},
%\frac{2}{2+2a_1+s_A-a_2-a_3},\frac{4}{3+4a_1+2s_A-a_2-a_3-a_4},\\
%&\frac{3}{2+3a_1+s_A}, \mathrm{min}\{\frac{2}{1+2a_1+s_A},1\}\\
%\endaligned\right.%
%$$

\end{proposition}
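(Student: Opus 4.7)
The plan is to closely mirror the proofs of Lemmas \ref{s>4}--\ref{sR} and Proposition \ref{alphaCcobdleF}, modifying only the intersection-theoretic data to reflect that here one of the curves in $\{E_1,\ldots,E_7\}$, denoted $D'$, is not $\pi$-exceptional but instead has class $h - d_7 - d_8$, so that two new $\pi$-exceptional curves $D_7, D_8$ take its place. This structural change is what converts the factor $2a_1 + s_A$ appearing in the $\mathbb{F}_1$ denominators into $s_{A7}$ here. Note that in the $s_A > 4$ regime the claimed formula already agrees with the $\mathbb{F}_1$ formula, so the genuine work lies in the middle and small regimes.

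First, I would apply Lemma \ref{mult_curve} (with $a_8 = 0$) to reduce to the case in which the maximal multiplicity of $\mu A$ is attained along a $\pi$-exceptional curve. In each of the three regimes, one checks that the bound $\frac{2 + s_A + 2a_1 - a_7 + 3a}{3}$ is dominated by the candidate value of $\alpha_c(S, \mu A)^{-1}$, so the reduction applies. I would then fix the convention $D_i = E_i$ for $i = 1,\ldots,6$ and $D' = E_7$, with $D_7, D_8$ the auxiliary $\pi$-exceptional curves coming from the factorization through $\mathbb{P}^1 \times \mathbb{P}^1$. Since $a_1$ is the largest coefficient among the $\pi$-exceptional curves, the maximal multiplicity is sought along $E_1$.

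Second, for each regime I would construct an explicit effective $\mathbb{Q}$-divisor equivalent to $\mu A$ whose multiplicity along $E_1$ realizes the claimed bound, by combining the divisors built in Lemmas \ref{s>4}, \ref{s23}, \ref{s234}, \ref{sR}, and the final $s_A \leq 1$ lemma with a correction of the form $a(l_8 + E_7)$ which absorbs the conic fiber $B$ and redistributes coefficients onto the new exceptional curves. For the matching upper bound on the multiplicity I would intersect with an appropriate combination of test curves chosen among $-K_S$, $Z$, the $C_i$'s, the $l_i$'s, $Q$, and the $\pi$-exceptional curves, exactly parallelling the arguments in the referenced lemmas.

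The main obstacle is the combinatorial bookkeeping in the intersection computations: because $E_7 = h - d_7 - d_8$ is no longer $\pi$-exceptional, its intersection with many of the test curves used in the $\mathbb{F}_1$ arguments is now $1$ rather than $0$, so the total intersection sums shift. One must verify that the net effect is precisely to replace $2a_1 + s_A$ by $s_{A7}$ in each of the three formulas. Once this accounting is carried out in a single regime, the remaining regimes follow by routine adaptation of the corresponding lemmas.
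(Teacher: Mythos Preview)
Your approach differs from the paper's. You propose to redo the intersection computations of Lemmas~\ref{s>4}--\ref{sR} from scratch, carefully tracking how the non-$\pi$-exceptional curve $E_7 = h - d_7 - d_8$ changes each intersection number. The paper instead uses a relabelling trick that reduces wholesale to Subcase~1: for each index $m\in\{1,\ldots,7\}$ one replaces $E_m$ by the other component $E^a_m$ of its fiber and adjoins a further disjoint $(-1)$-curve $E^b_m$, obtaining eight pairwise disjoint $(-1)$-curves $(E'_1,\ldots,E'_8)=(E_1,\ldots,\widehat{E}_m,\ldots,E_7,E^a_m,E^b_m)$ and hence a blow-down factoring through $\mathbb{F}_1$. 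One then invokes the Subcase~1 analysis verbatim with the substituted parameters $(a'_1,\ldots,a'_8)=(a_1,\ldots,\widehat{a}_m,\ldots,a_7,0,0)$, and reads off the final statement from the ordering $s_{A7}\geq\cdots\geq s_{A1}$. This avoids redoing any constructions or intersection bounds.

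Your direct route is workable in principle but heavier, and there is a gap in the plan as written. You fix $D'=E_7$ at the outset; the paper instead runs the relabelling for \emph{every} $m$ and only afterwards identifies the relevant one via the inequality $s_{A7}\geq\cdots\geq s_{A1}$. Without that comparison step you have not explained why $m=7$ is the choice that actually computes $\alpha_c$ rather than merely giving one bound among several. Moreover, your heuristic that the passage from $\mathbb{F}_1$ to $\mathbb{P}^1\times\mathbb{P}^1$ is ``precisely to replace $2a_1+s_A$ by $s_{A7}$'' does not match all of the stated denominators: in the third middle-regime term it is $3a_1+s_A$ (not $2a_1+s_A$) that becomes $s_{A7}$, and in the $s_A>4$ regime nothing changes at all. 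So the bookkeeping you anticipate is more delicate than a single uniform substitution; the paper sidesteps this entirely by reusing Subcase~1 rather than recomputing.
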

For each $-1$-curve $E_m$, we have another $-1$-curve $E^a_m$ in the fiber of $\phi$ that contains $E_i$. In addition, there is a $-1$-curve $E^b_m$ that intersects $E^b_m$ but none of the other exceptional divisors of $\phi_1$. The curve $B$ is linearly equivalent to the divisor $E_m+E^a_m$.   Then for each $m$ use different labelling $\mathcal{L}_m$ as follows:
 \begin{eqnarray*}
(a'_1,\ldots,a'_8)=(a_1,\ldots,\hat{a}_m,\ldots, a_7,0, 0),\\
(E'_1,\ldots,E'_8)=(E_1,\ldots, \hat{E}_m, \ldots , E_7,E^a_m,E^b_m),
\end{eqnarray*}
where the sequences  $(a_1,\ldots,\hat{a}_m,\ldots, a_7,0, 0)$ and $(E_1,\ldots, \hat{E}_m, \ldots , E_7,E^a_m,E^b_m)$ are obtained by deleting $a_m$ and $E_m$ from sequences $(a_1,\ldots, a_7,0, 0)$ and $(E_1,\ldots, E_7,E^a_m,E^b_m)$, respectively.
By substituting $(a_1,\ldots,a_8)$ and $(E_1,\ldots,E_8)$ by $(a_1,\ldots,a_8)$ and $(E'_1,\ldots,E'_8)$ and applying same arguments in the case of \emph{Subcase 1}, we obtain the same results as \emph{Subcase 1} in terms of  $(a'_1,\ldots,a'_8)$.
 It yields to a required statement since $s_{A7}\geq \cdots \geq s_{A1}$.

 %In detail,
%\begin{eqnarray*}
%\frac{1}{2+a'_1+a}=\frac{1}{2+a}, \frac{2}{2+2a_1+s'_A-a'_2-a'_3+2a}=\frac{2}{2+s_{A7}-a_1-a_2+2a},\\
% \frac{4}{3+4a'_1+2s'_A-a'_2-a'_3-a'_4+4a}=\frac{4}{3+2s_{A7}-a_1-a_2-a_3+4a},\\
%\frac{3}{2+3a'_1+s'_A+3a}=\frac{3}{2+s_{A7}+3a}, \frac{2}{1+2a'_1+s'_A+2a}=\frac{2}{1+s_{A7}+2a},
%\end{eqnarray*}
%where $s'_A=\sum_{i=2}^7 a'_i$.

\section{Proof of Main theorem}
Let $\mathrm{LCS}(S,D) \subset S$ be the subset such that $P \in \mathrm{LCS}(S,D)$ if and only if
$(S,D)$ is not log terminal at the point $P$. The set $\mathrm{LCS}(S,D)$ is called the
locus of log canonical singularities.
\begin{lemma}\label{lcs}
Suppose that $-(K_S +D)$ is ample. Then the set $\mathrm{LCS}(S,D)$
is connected.
\end{lemma}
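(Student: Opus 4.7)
The plan is to invoke the classical Shokurov--Koll\'ar connectedness argument, specialized to our setting where $S$ is a smooth projective surface. The key is to convert the hypothesis ``$-(K_S+D)$ is ample'' into a Kawamata--Viehweg vanishing statement on a log resolution.

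First, I would take a log resolution $f:Y\to S$ of the pair $(S,D)$ and write
\[
K_Y = f^{*}(K_S+D) + \sum_E a(E;S,D)\cdot E,
\]
where the sum runs over all prime divisors $E$ on $Y$ supported in $f^{-1}(\operatorname{Supp} D)\cup\operatorname{Exc}(f)$. Next I would split the rounded discrepancy divisor as $\lceil K_Y - f^{*}(K_S+D)\rceil = A - B$, where $A$ and $B$ are effective $\mathbb{Z}$-divisors with no common components: $A$ collects the components with $a(E;S,D)>-1$, and $B$ collects those with $a(E;S,D)\leq -1$. Because any strict transform $\widetilde{D}_j$ has discrepancy $-d_j\leq 0$, the divisor $A$ is supported only on $f$-exceptional components, while $f(\operatorname{Supp} B) = \mathrm{LCS}(S,D)$ by construction.

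The second step is vanishing. By construction,
\[
A - B - K_Y \sim_{\mathbb{Q}} -f^{*}(K_S+D) + \Delta,
\]
where $\Delta$ has simple normal crossing support and coefficients in $[0,1)$ (it is the fractional part we discarded when rounding up). Since $-(K_S+D)$ is ample, $-f^{*}(K_S+D)$ is nef and big, so $-f^{*}(K_S+D)+\Delta$ is a nef and big $\mathbb{Q}$-divisor with an SNC boundary of coefficients strictly less than $1$. Kawamata--Viehweg vanishing in its log form therefore yields $R^{1}f_{*}\mathcal{O}_Y(A-B)=0$.

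Then I would push forward the short exact sequence
\[
0\longrightarrow \mathcal{O}_Y(A-B)\longrightarrow \mathcal{O}_Y(A)\longrightarrow \mathcal{O}_B(A)\longrightarrow 0.
\]
The vanishing gives a surjection $f_{*}\mathcal{O}_Y(A)\twoheadrightarrow f_{*}\mathcal{O}_B(A)$. Since $A$ is effective and $f$-exceptional, $f_{*}\mathcal{O}_Y(A)=\mathcal{O}_S$, producing a surjection $\mathcal{O}_S\twoheadrightarrow f_{*}\mathcal{O}_B$.

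Finally, if $\mathrm{LCS}(S,D)=f(\operatorname{Supp} B)$ were disconnected, then $B$ would split as $B_1\sqcup B_2$ with $f(B_1)\cap f(B_2)=\emptyset$, giving a decomposition $f_{*}\mathcal{O}_B = f_{*}\mathcal{O}_{B_1}\oplus f_{*}\mathcal{O}_{B_2}$ as sheaves supported on disjoint closed subsets of $S$. Taking global sections, the surjection from $\mathcal{O}_S$ would produce a nontrivial idempotent in $H^{0}(S,\mathcal{O}_S)=k$, which is impossible on the connected variety $S$. This contradiction establishes connectedness. The main technical point to be careful about is the bookkeeping in Step~1 ensuring that $A$ is exceptional and that the fractional remainder $\Delta$ really does have coefficients in $[0,1)$ with SNC support, which is what legitimizes the application of Kawamata--Viehweg in Step~2.
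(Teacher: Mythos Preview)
The paper's own ``proof'' is simply a citation to Koll\'ar's connectedness theorem (Ast\'erisque~211, Theorem~17.4); you have instead written out the standard Shokurov--Koll\'ar argument. The overall architecture is right, but there is a genuine slip in the last two steps.

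You correctly note that $-f^{*}(K_S+D)$ is nef and big, but then you invoke only the \emph{relative} vanishing $R^{1}f_{*}\mathcal{O}_Y(A-B)=0$ and deduce a sheaf surjection $\mathcal{O}_S\twoheadrightarrow f_{*}\mathcal{O}_B(A)$ on $S$. From this alone one cannot conclude that the support $f(\operatorname{Supp}B)$ is connected: a surjection of $\mathcal{O}_S$-modules $\mathcal{O}_S\twoheadrightarrow\mathcal{G}$ merely identifies $\mathcal{G}$ with $\mathcal{O}_W$ for a closed subscheme $W\subset S$, and $W$ is allowed to be disconnected (e.g.\ two reduced points). In particular, a sheaf surjection does not give a surjection on global sections, so your ``nontrivial idempotent in $H^{0}(S,\mathcal{O}_S)=k$'' does not arise.

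The fix is easy and already implicit in what you wrote. Since $-f^{*}(K_S+D)$ is nef and big on $Y$ (not merely $f$-nef and $f$-big), Kawamata--Viehweg gives the \emph{global} vanishing $H^{1}(Y,\mathcal{O}_Y(A-B))=0$. The long exact sequence in cohomology then yields a surjection
\[
k \;=\; H^{0}(S,\mathcal{O}_S)\;=\;H^{0}(Y,\mathcal{O}_Y(A)) \twoheadrightarrow H^{0}(B,\mathcal{O}_B(A)).
\]
Because $A$ and $B$ share no components, $\mathcal{O}_B\hookrightarrow\mathcal{O}_B(A)$, so $\dim_k H^{0}(B,\mathcal{O}_B(A))$ is at least the number of connected components of $B$; hence $B$ is connected, and therefore so is $f(\operatorname{Supp}B)=\mathrm{LCS}(S,D)$. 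Replacing your $R^{1}f_{*}$ step by this global $H^{1}$ step makes the argument complete.
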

\begin{proof}
See Theorem 17.4 in \cite{Ko}.
\end{proof}

\begin{lemma}\label{ptalpha}
Let $S$ be a smooth del Pezzo surfaces of degree one and $A$ be an ample divisor on $S$. Either the alpha invariant $\alpha(S,\mu A)$ is greater than $\frac{2}{3+a_1}$ or $\alpha(S,\mu A)=\alpha_c(S,\mu A)$.
\end{lemma}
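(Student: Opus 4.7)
The plan is to argue by contradiction. Since $\alpha(S,\mu A)\le \alpha_c(S,\mu A)$ holds automatically, assuming the second alternative fails gives $\alpha(S,\mu A)<\alpha_c(S,\mu A)$, and I further assume $\alpha(S,\mu A)\le \tfrac{2}{3+a_1}$. By the definition of the alpha invariant I may choose an effective $\mathbb{Q}$-divisor $D\sim_{\mathbb{Q}}\mu A$ and a rational number $c\le \alpha(S,\mu A)$ such that $(S,cD)$ is strictly log canonical, that is, not log canonical but $(S,c'D)$ is log canonical for all $c'<c$. Because $c<\alpha_c(S,\mu A)$, the pair $(S,cD)$ is log canonical along every curve; hence $\mathrm{LCS}(S,cD)$ is a nonempty finite set of points, and I fix $P$ in it. The standard smooth-surface criterion (that $\mathrm{mult}_{P}(cD)\le 1$ implies log canonicity at $P$) then yields
\[
\mult_{P}D \;>\; \frac{1}{c} \;\ge\; \frac{3+a_1}{2}.
\]

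Next I decompose $D=\sum_{i}\beta_{i}E_{i}+\gamma B+D_{0}$, with $\gamma=0$ in the birational case and $D_{0}$ containing none of the $E_{i}$ nor $B$ as a component. Log canonicity of $(S,cD)$ along each such curve forces $\beta_{i},\gamma\le 1/c$; moreover the computation of $\alpha_c$ carried out in Propositions \ref{alphaCbirational}, \ref{alphaCcobdleF} and \ref{alphaCcobdleP} provides the sharper bounds $\beta_{i}\le 1/\alpha_c(S,\mu A)$. I then run a case analysis on which $(-1)$-curves pass through $P$. If $P$ lies on one of the distinguished $(-1)$-curves $E$ of Section~2 (an $E_{i}$, or one of the curves $l_i$, $C_i$, $Q$, $Z$), I use $\mult_{P}D_0\le D_0\cdot E=\mu A\cdot E+\beta_{E}$ together with $\mult_{P}D=\beta_E+\mult_{P}D_0$: combining this with the explicit values $\mu A\cdot E_{i}=1-a_i$, $\mu A\cdot l_i=1+a_1+a_i$, $\mu A\cdot Q=2+a_1+a_5+a_6+a_7+a_8$, etc., and the upper bound on $\beta_E$ just mentioned, contradicts $\mult_{P}D>(3+a_1)/2$. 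If $P$ lies on none of these distinguished curves, I use the pencil $|-K_S|$: a general member $F$ through $P$ is smooth at $P$, with $\mu A\cdot F=1+s_A+a_1+a(-K_S\cdot B)$, and the same intersection argument closes the case after subtracting any shared component.

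The technically delicate point, and the one I expect to consume most of the work, is the situation in which $P$ is the intersection of two $(-1)$-curves that both appear in $D$ with substantial coefficients, so that no single curve through $P$ is cheap enough to force the desired contradiction directly. My plan here is to blow up $P$ to obtain $f\colon \widetilde S\to S$ with exceptional divisor $E_P$, transfer the boundary to $c\widetilde D+(c\,\mult_{P}D-1)E_P$, and invoke Lemma~\ref{lcs} on $\widetilde S$ (using that $-(K_{\widetilde S}+c\widetilde D+(c\,\mult_{P}D-1)E_P)$ is ample in the parameter range at hand) to propagate the non-log-canonical locus to a curve---either $E_P$ itself, or the strict transform of a curve downstairs; either outcome contradicts $c<\alpha_c(S,\mu A)$ after pushing down and re-applying the intersection inequalities. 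Sharpness of the breakpoints in the $\alpha_c$ formulas, together with the Lemma~\ref{mult_curve} bound $\tfrac{2+s_A+2a_1-a_7-a_8+3a}{3}$ already used throughout Section~3, guarantees that the numerical comparisons needed in each subcase close cleanly.
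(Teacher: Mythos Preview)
Your overall plan---reduce to an isolated non-log-canonical point $P$, then bound $\mult_P D$ by intersecting with curves through $P$---is natural, but the numerics do not close as written, and this is a genuine gap rather than a missing detail.

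Take the birational case with $P\in E_1$. With your decomposition $D=\sum_j\beta_jE_j+D_0$, intersecting with $E_1$ yields $\mult_P D\le 2\beta_1+\mu A\cdot E_1=2\beta_1+(1-a_1)$. The only bound you invoke on $\beta_1$ is $\beta_1\le 1/\alpha_c(S,\mu A)$, and the Propositions give $1/\alpha_c\ge 1$ in all ranges (e.g.\ $1/\alpha_c=2+a_1$ when $s_A>4$, or $(1+2a_1+s_A)/2$ when $s_A\le 1$). Plugging in, you obtain at best $\mult_P D\le 2+a_1+s_A$ or worse, which is nowhere near the target $\tfrac{3+a_1}{2}\le 2$. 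The same failure occurs for $P$ on $l_i$, $C_i$, $Q$, $Z$: these curves have $\mu A$-degree at least $1$, and you have no control on their coefficients in $D$ beyond the coarse $\alpha_c$-bound. Your ``delicate'' blow-up step also does not work as stated: with your boundary choice one computes
\[
-(K_{\widetilde S}+c\widetilde D+(c\,\mult_P D-1)E_P)=f^*\big(-(K_S+cD)\big),
\]
which is a pullback and hence \emph{not} ample on $\widetilde S$, so Lemma~\ref{lcs} (requiring ampleness) is not available.

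The paper avoids all of this by contracting to $\mathbb{P}^2$ rather than staying on $S$ or blowing up. One chooses eight disjoint $(-1)$-curves not passing through $P$ (possibly a different set from $E_1,\dots,E_8$ if $P$ lies on some $E_i$), contracts them to get $\pi\colon S\to\mathbb{P}^2$, and then applies Lemma~\ref{lcs} on $\mathbb{P}^2$: one adds a general line (or a suitable line through a blown-down point) to the pushforward $\lambda\pi_*(\mu A)$, checks that $-K_{\mathbb{P}^2}$ minus this boundary is still ample precisely because $\lambda<\tfrac{2}{3+a_1}\le\tfrac{2}{3}$, and observes that the resulting LCS contains both the isolated point $\pi(P)$ and the added line, contradicting connectedness. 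The point is that on $\mathbb{P}^2$ the anticanonical is of degree $3$, giving enough positivity to absorb an extra line; on $S$ (degree $1$) or a blow-up of $S$ there simply isn't room, which is why your direct intersection estimates cannot be made to work.
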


\begin{proof}
 Let $ \lambda <\frac{2}{3+a_1}$. Suppose that the pair $(S, \lambda \mu A)$ is not log canonical at the point $p$ on $S$ and log canonical along a punctured neighborhood of $p$.    We will use  notations in the section 3.

When $\phi$ is birational, $E_1,\ldots ,E_8$ are mutually disjoint. Suppose the point $p$ does not lies on  the curves $E_1,\ldots ,E_8$. Then the contraction of $E_i$'s, $\pi:S\rightarrow \mathbb{P}^2$ is a locally isomorphism in a neighborhood of $p$ and $\pi(\mu A)=-K_{\mathbb{P}^2}$. Take a general line $L$ in $\mathbb{P}^2$, then $-K_{\mathbb{P}^2}-(\lambda \mu \pi(A)+L)$ is ample. But $\mathrm{LCS}(\mathbb{P}^2,\lambda \mu A+L)$ contains disjoint set $\pi(p)\cup L$. It is contradiction by Lemma \ref{lcs}.

Now suppose the point $p$ lies on $E_i$. Let $l_{ij}$ be a unique (-1)-curve meeting $E_i$ and $E_j$. Choose $-1$-curves $l_{ij},l_{jk},l_{ik}$ such that these curves do not pass through $p$.
Then the  curves $\{l_{ij},l_{jk},l_{ik}\}\cup(\{E_1,\ldots,E_8\}\setminus\{E_i,E_j,E_k\})$
are mutually disjoint so that we obtain $\pi':S\rightarrow \mathbb{P}^2$ by contracting  these curves. Then  we have that $\mu \pi'(A)= -K_{\mathbb{P}^2}+a_i\pi'(E_i)+a_j\pi'(E_j)+a_k\pi'(E_k)$ and $-K_{\mathbb{P}^2}-(\lambda \mu \pi'(A)+(1-\lambda a_j) \pi'(E_j)-\lambda a_k \pi'(E_k))$ is ample. But $\mathrm{LCS}(\mathbb{P}^2,\lambda \mu A+(1-\lambda a_j) \pi'(E_j)-\lambda a_k \pi'(E_k))$ contains disjoint set $\pi'(p)\cup \pi'(E_j)$. It is contradiction by Lemma \ref{lcs}.

  When $\phi$ is conic bundle which factors through $\mathbb{F}_1$, $\mu A=-K_S+\sum_{i=1}^7 a_i E_i+aB $, the contraction of $E_1,\ldots,E_7$ defines  the morphism $S\rightarrow \mathbb{F}_1$
  and there is a unique $-1$-curve $E_8$ in $\mathbb{F}_1$.
  The curves $E_1,\ldots ,E_8$ are mutually disjoint. Suppose the point $p$ does not lie on  the curves $E_1,\ldots ,E_8$. As the case that $\phi$ is birational, the contraction of $E_i$'s, $\pi:S\rightarrow \mathbb{P}^2$ is a locally isomorphism in a neighborhood of $p$.   The divisor $-K_{\mathbb{P}^2}-(\lambda \mu \pi(A)+(1-\lambda a)\pi(B))$ is ample. But $\mathrm{LCS}(\mathbb{P}^2,\lambda \mu \pi(A)+(1-\lambda a)\pi(B))$ contains disjoint set $\pi(p)\cup \pi(B)$. It is a contradiction to Lemma \ref{lcs}.
   If the point $p$ lies on $E_i$, let $\pi':S\rightarrow \mathbb{P}^2$ be a same contraction in the case that $\phi$ is birational.  Taking $B\sim_{\mathbb{Q}} E_j+E_j'$, where $E_j'$ is $-1$-curve which meets $E_j$ and none of other $E_i$'s.
   A divisor $-K_{\mathbb{P}^2}-(\lambda \mu \pi'(A)+(1-\lambda (a_j+a)) \pi'(E_j)-\lambda a_k \pi'(E_k)-a\pi'(E_i'))$ is ample. But $\mathrm{LCS}(\mathbb{P}^2,\lambda \mu A+(1-\lambda a_j) \pi'(E_j)-\lambda a_k \pi'(E_k)-a\pi'(E_i'))$ contains disjoint set $\pi'(p)\cup \pi'(E_j)$. It is a contradiction to Lemma \ref{lcs}.

When $\phi$ is conic bundle which factors through $\mathbb{P}^1\times\mathbb{P}^1$,  the contraction of $E_1,\ldots, E_6$ defines the morphism $S\rightarrow S_7$. The del Pezzo surface $S_7$ has two disjoint $-1$-curves $E_7^a,E_7^b$ which intersects $E_7$. The curves $E_1,\ldots ,E_6,E_7^a,E_7^b$ are mutually disjoint. For this disjoint set of curves, similar arguments as the cases of conic bundle which factors through $\mathbb{F}_1$ force to a contradiction.

\end{proof}

\emph{Proof of Main Theorem \ref{mtheorem}}. We can write  $\mu A =-K_S +\sum_{i=1}^8 a_iE_i+aB$, where $a=0$ if $\phi $ is birational and $a_8=0$ if  $\phi $ is conic bundle.
%Then we have
%$$\nu(\mu A)=\frac{-K_S\cdot (\mu A) }{(\mu A)^2}=\frac{1+\sum_{i=1}^8 a_i +2a }{1+\sum_{i=1}^8 a_i(2-a_i)+2a}.$$ Since $\frac{1}{2}\leq\frac{a_i}{a_i(2-a_i)}\leq 1$, we obtain inequality $\frac{1}{2}\leq\nu(\mu A)\leq 1$.

Note that $\mu \nu(\mu A)=\nu (A)$ and $\mu \alpha(S,\mu A)=\alpha(S,A)$. Therefore $\alpha(S,A)>\frac{2}{3}\nu(A)$ if and only if $\alpha(S,\mu A)>\frac{2}{3}\nu(\mu A)$.
Suppose that $-K_S-\frac{2}{3}\nu (A) A$ is nef. Then $-K_S-\frac{2}{3} \nu(\mu A) \mu A$  is also nef. Thus the intersection number $(-K_S-\frac{2}{3} \nu(\mu A) \mu A)\cdot (6h-3e_1-2e_2-2e_3-2e_4-2e_5-2e_6-2e_7-2e_8)$ is non-negative. It means that $$\frac{1}{3a_1+2s_A+1+2a}\geq \frac{2}{3}\nu (\mu A).$$

By Lemma \ref{ptalpha}, either $\alpha(S,\mu A)>\frac{2}{3+a_1}$ or  $\alpha(S,\mu A)=\alpha_c(S,\mu A)$. Suppose, the latter then, we can easily check that all the values of $\alpha_c(S,\mu A)$ in propositions \ref{alphaCbirational}, \ref{alphaCcobdleF}, \ref{alphaCcobdleP} are greater than the value $\frac{1}{3a_1+2s_A+1+2a}$ according to the range of $s_A$. It means that $\alpha(S,\mu A)=\alpha_c(S,\mu A)>\frac{2}{3}\nu(\mu A)$. Suppose $\alpha(S,\mu A)\neq\alpha_c(S,\mu A)$, then $\alpha(S,\mu A)>\frac{2}{3+a_1}$.
If $5a_1+4s_A+4a>1$, then
$\frac{2}{3+a_1}>\frac{1}{3a_1+2s_A+1+2a}\geq \frac{2}{3}\nu(\mu A)$. On the other hand, assume that  $2a_1+a\leq 5a_1+4s_A+4a\leq 1$.
Use the fact that $0\leq a_i\leq a_i(2-a_i)$ for any $a_i$, and for positive numbers $A,B$ and non-negative numbers $C$ and $D$, if $A\leq B$ and $C\leq D$, $\frac{A}{B}\geq \frac{A+C}{B+D}$.
Then it is easily verified that $$\frac{2}{3+a_1}>\frac{2}{3}\frac{1+a_1+2a}{1+2a_1-a_1^2+2a}\geq \frac{2}{3}\frac{1+\sum_{i=1}^8 a_i +2a }{1+\sum_{i=1}^8 a_i(2-a_i)+2a}=\frac{2}{3}\frac{-K_S\cdot (\mu A) }{(\mu A)^2}=\frac{2}{3}\nu(\mu A).$$

Thus in any case, $\alpha(S,\mu A)>\frac{2}{3}\nu (\mu A)$.

\qed
\begin{conjecture}
If $S$ be a smooth del Pezzo surface of degree 1 which has no cuspidal curve in $|-K_S|$, then we have $\alpha(S,A)=\alpha_c(S,A)$ for any ample divisor $A$ in $S$.
\end{conjecture}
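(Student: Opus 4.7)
The goal is to show $\alpha(S,A) \geq \alpha_c(S,A)$, since the reverse inequality holds by definition. I would argue by contradiction: suppose an effective $\mathbb{Q}$-divisor $D \sim_{\mathbb{Q}} \mu A$, a point $p \in S$, and a rational $\lambda < \alpha_c(S,\mu A)$ exist with $(S,\lambda D)$ not log canonical at $p$ but log canonical in a punctured neighborhood. By Lemma \ref{ptalpha} I may assume $\lambda > 2/(3+a_1)$. Since $S$ is smooth at $p$ and no irreducible component of $D$ has coefficient exceeding $1/\lambda$ (as $\lambda < \alpha_c$), the standard smooth-surface bound forces $\operatorname{mult}_p D > 2/\lambda$.

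The central tool is the anticanonical pencil $|-K_S|$, which is one-dimensional because $\deg S = 1$. Take $T \in |-K_S|$ passing through $p$; if $T$ is a component of $D$, write $D = cT + D'$ with $c < 1/\lambda$ and reduce inductively to $D' \sim_{\mathbb{Q}} \mu A - c(-K_S)$. Otherwise apply inversion of adjunction to the pair $(S, T + \lambda D)$ at $p$, which yields $(D \cdot T)_p \geq \operatorname{mult}_p T / \lambda$ branchwise. The no-cuspidal hypothesis guarantees that, after varying $T$ in the pencil through $p$ (possible when $p$ is the base point of $|-K_S|$) or fixing the unique such $T$ otherwise, every analytic branch of $T$ at $p$ is smooth. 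Combined with the global intersection $D \cdot T = -K_S \cdot \mu A = 1 + a_1 + s_A + 2a$, this provides a preliminary lower bound on $\lambda$.

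This preliminary bound is typically weaker than $\alpha_c$, so I would sharpen it by combining $T$ with auxiliary curves through $p$: the $(-1)$-curves $E_i$, $l_i$, $C_i$, $Q$, $Z$, and in the conic bundle case the fibers $B$. For each range of $s_A$ in Propositions \ref{alphaCbirational}--\ref{alphaCcobdleP} and each stratum on which $p$ may lie, one assembles intersection inequalities $D \cdot (\text{curve through } p) \geq \operatorname{mult}_p D$ whose simultaneous satisfaction forces $\operatorname{mult}_p D \leq 2/\alpha_c(S,\mu A)$, contradicting the non-lc hypothesis and closing the argument. The main obstacle will be this systematic case analysis across ranges, polarization types, and strata. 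The cuspidal hypothesis enters precisely here because without it a cuspidal $T$ would carry $\operatorname{mult}_p T = 2$ with a single tangent direction, permitting $D$ to concentrate all of $D \cdot T$ at $p$ along that tangent; this is exactly the configuration that produces the drop $\alpha(S,-K_S) = 1/2$ in Cheltsov's anticanonical analysis and that the conjecture's hypothesis rules out.
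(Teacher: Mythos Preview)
The statement you are attempting to prove is labelled \textbf{Conjecture} in the paper and is left open: the paper provides no proof, so there is nothing to compare your argument against. What you have written is therefore not a reconstruction of a known proof but a proposed strategy for an open problem.

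As a strategy it is reasonable in outline, but it contains a genuine error and a large acknowledged gap. The error is the claim that ``no irreducible component of $D$ has coefficient exceeding $1/\lambda$'' together with non-log-canonicity at a smooth point forces $\operatorname{mult}_p D > 2/\lambda$. This is false: take two smooth curves tangent to order $2$ at $p$ (a tacnode), each with coefficient $1-\varepsilon$; the log canonical threshold of a tacnode is $3/4$, so for $\varepsilon<1/4$ the pair is not log canonical at $p$, yet $\operatorname{mult}_p D = 2(1-\varepsilon) < 2$. The standard bound on a smooth surface gives only $\operatorname{mult}_p D > 1/\lambda$, which is substantially weaker and would not feed cleanly into the intersection-number inequalities you sketch afterwards.

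The larger issue is that you yourself identify ``the systematic case analysis across ranges, polarization types, and strata'' as the main obstacle and then do not carry it out. For each value of $\alpha_c(S,\mu A)$ computed in Propositions~\ref{alphaCbirational}--\ref{alphaCcobdleP} you would need to exhibit, for every possible location of $p$, a configuration of curves through $p$ whose intersection numbers with $D$ bound $\operatorname{mult}_p D$ by exactly $1/\alpha_c$ (not $2/\alpha_c$, given the corrected multiplicity bound), and moreover handle the inversion-of-adjunction step when $p$ lies on a member of $|{-K_S}|$ with a node. That this can be done uniformly is precisely the content of the conjecture; the paper's authors evidently did not see how to complete it, and your outline does not supply the missing argument.
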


\end{document}